\theoremstyle{plain}
\newtheorem{theorem}{Theorem}[section]
\newtheorem{cor}[theorem]{Corollary}
\newtheorem{lem}[theorem]{Lemma}
\newtheorem{prop}[theorem]{Proposition}
\newtheorem{mtheorem}{Theorem} 
\theoremstyle{definition}
\newtheorem{rem}[theorem]{Remark}
\newtheorem{mrem}{Remark}
\numberwithin{equation}{section}
\numberwithin{theorem}{section}
\begin{document} 

\title{Logistic elliptic equation with a nonlinear boundary condition arising from coastal fishery harvesting II}

\author{Kenichiro Umezu}  

\address{Department of Mathematics, Faculty of Education, Ibaraki University, Mito 310-8512, Japan}

\email{\tt kenichiro.umezu.math@vc.ibaraki.ac.jp} 

\subjclass{35J65, 35B32, 35J25, 92D40}  

\keywords{logistic elliptic equation, concave--convex nonlinearity, positive solution, uniqueness, stability, bifurcation from zero, boundary harvesting} 

\thanks{} 



    \maketitle

\begin{abstract}
We study positive solutions of a logistic elliptic equation with a nonlinear boundary condition that models coastal fishery harvesting (\cite{GUU19}). An essential role is played by the smallest eigenvalue of the Dirichlet eigenvalue problem, in terms of which, a noncritical case is studied in \cite{Um2022}. In this paper, we extend our analysis to the {\it critical} case. We also further study the noncritical case for a more precise description of the positive solution set, including uniqueness and stability analysis for {\it large parameters}. Our approach relies on an energy method, sub- and supersolutions, and implicit function analysis. 
\end{abstract}

\section{Introduction} 

This paper is devoted to the study of positive solutions for the following logistic elliptic equation with a nonlinear boundary condition arising from coastal fishery harvesting (\cite{GUU19}):
\begin{align} \label{p}
\begin{cases}
-\Delta u = u-u^{p} & \mbox{ in } \Omega, \\
u\geq0 & \mbox{ in } \Omega, \\ 
\frac{\partial u}{\partial \nu}  = -\lambda u^q & \mbox{ on } \partial\Omega. 
\end{cases} 
\end{align}
Here, $\Omega\subset \mathbb{R}^N$, $N\geq1$, is a bounded domain with smooth boundary $\partial\Omega$, 
$\Delta = \sum_{i=1}^N \frac{\partial^2}{\partial x_i^2}$ is the usual Laplacian in $\mathbb{R}^N$, $0<q<1<p$, $\lambda \geq 0$ is a parameter, and 
$\nu$ is the unit outer normal to $\partial\Omega$. Unless stated otherwise, throughout this paper we assume the subcritical condition 
\begin{align} \label{subcp}
p<\frac{N+2}{N-2} \quad\mbox{ for } \ N>2. 
\end{align}
In the case of $p=2$, the unknown function $u\geq0$ ecologically represents the biomass of fish that inhabit a \textit{lake} $\Omega$, obeying the logistic law (\cite{CCbook}), and the nonlinear boundary condition 
means fishery harvesting with the harvesting effort $\lambda$ on the \textit{lake coast} $\partial\Omega$, obeying the Cobb--Douglas production function (\cite[Subsection 2.1]{GUU19}).

A nonnegative function $u\in H^1(\Omega)$ is called a \textit{nonnegative (weak) solution} of \eqref{p} if $u$ satisfies  
\begin{align} \label{def}
\int_{\Omega} \biggl( \nabla u \nabla \varphi -u\varphi + u^p \varphi \biggr) + \lambda \int_{\partial\Omega} u^{q} \varphi=0, \quad \varphi \in H^1(\Omega)
\end{align}
(we may regard $(\lambda,u)$ as a nonnegative solution of \eqref{p}). 
It is seen that problem \eqref{p} has a solution $(\lambda,0)$ for every $\lambda>0$, called a {\it trivial solution}. The sets $\{ (\lambda,0) : \lambda\geq0\}$ 
and $\{ (\lambda, 0) : \lambda > 0 \}$ are said to be the {\it trivial lines}. We know 
(\cite{Ro2005}) that a nonnegative solution $u$ of \eqref{p} belongs to the space $W^{1,r}(\Omega)$ for $r>N$ (consequently, $C^{\theta}(\overline{\Omega})$ for  $\theta \in (0,1)$). 
Moreover, a nontrivial nonnegative solution $u$ of \eqref{p} satisfies that $u\in C^{2+\theta}(\Omega)$ for $\theta \in (0,1)$, and $u>0$ in $\Omega$ (\cite{GT83}, \cite{PW67}), which is called \textit{a positive solution}. Indeed, 
if $u>0$ in $\overline{\Omega}$, then $u\in C^{2+\theta}(\overline{\Omega})$ by the bootstrap argument using elliptic regularity, and $u$ satisfies \eqref{p} pointwisely in $\overline{\Omega}$ in the classical sense. 
However, we do not know if $u>0$ on the entirety of $\partial\Omega$ for a positive solution $u$ of \eqref{p}. As a matter of fact, Hopf's boundary point lemma (\cite{PW67}) does not work because of the lack of the {\it one-sided Lipschitz condition} \cite[(4.1.19)]{Pa92} for mapping $0\leq u \mapsto (-u^q)$ for $u$ close to $0$.

For a positive solution $(\lambda,u)$ of \eqref{p} satisfying $u>0$ in $\overline{\Omega}$, we call $\gamma_1=\gamma_1(\lambda,u) \in \mathbb{R}$ the smallest eigenvalue of the linearized eigenvalue problem at $(\lambda,u)$
\begin{align} \label{gam0956}
\begin{cases}
-\Delta \varphi = \varphi - pu^{p-1}\varphi + \gamma \varphi & \mbox{ in } \Omega, \\
\frac{\partial\varphi}{\partial \nu} = - \lambda q u^{q-1} \varphi + \gamma \varphi & \mbox{ on } \partial\Omega.
\end{cases}    
\end{align}
It is well known that $\gamma_1$ is simple with a positive eigenfunction $\varphi_1\in C^{2+\theta}(\overline{\Omega})$ satisfying  $\varphi_1>0$ in $\overline{\Omega}$. 
Indeed, $\gamma_1$ is characterized by the variational formula
\begin{align*}
\gamma_1 
= \inf\left\{ \int_\Omega \biggl( |\nabla \varphi|^2 - \varphi^2 + pu^{p-1} \varphi^2 \biggr) + \lambda \int_{\partial \Omega} q u^{q-1} \varphi^2 : \varphi \in H^1(\Omega), \ \int_\Omega \varphi^2 + \int_{\partial\Omega} \varphi^2 = 1 \right\}. 
\end{align*}
A positive solution $u>0$ in $\overline{\Omega}$ of \eqref{p} is said to be {\it asymptotically stable}, {\it weakly stable}, and {\it unstable} if $\gamma_1>0$, 
$\gamma_1\geq 0$, and $\gamma_1<0$, respectively.

Problem \eqref{p} possesses a sublinear nonlinearity at infinity and also a concave--convex nature. Thus, the global uniqueness of a positive solution of \eqref{p} for every $\lambda>0$ would not be so easy to deduce. 
For nonlinear elliptic problems with a concave--convex nature, we refer to \cite{ABC94, Ta95, Al99, ACP01, FGU03, FGU06, Ko13}. 
The sublinear nonlinearity $(-u^q)$ that appears in \eqref{p} 
induces the absorption effect on $\partial\Omega$. 
Sublinear boundary conditions of the $u^q$ type were explored in 
\cite{GA04, imcom2007, GM08, RQU2014, RQU2015}. 
The case of an incoming flux on $\partial\Omega$ was studied in \cite{GA04, GM08}. 
The mixed case of absorption and an incoming flux on $\partial\Omega$ was studied in \cite{imcom2007}. The absorption case was also studied in \cite{RQU2014, RQU2015}, where a similar type of logistic elliptic equation with an indefinite weight has been analyzed for the existence and multiplicity of nontrivial nonnegative solutions.

An important role is played by the smallest eigenvalue $\beta_\Omega>0$ of the Dirichlet eigenvalue problem
\begin{align*} 
\begin{cases}
-\Delta \phi = \beta \phi & \mbox{ in } \Omega, \\
\phi = 0 & \mbox{ on } \partial \Omega. 
\end{cases}    
\end{align*} 
It is well known that $\beta_{\Omega}$ is simple with a positive eigenfunction 
$\phi_{\Omega}\in H^1_0(\Omega)$ 
(implying $\phi_\Omega \in C^{2+\theta}(\overline{\Omega})$ by elliptic regularity). 
Indeed, $\phi_\Omega > 0$ in $\Omega$, and 
\begin{align} \label{bdry1513}
c_{1}\leq -\frac{\partial \phi_{\Omega}}{\partial \nu}\leq c_{2} \quad\mbox{ on } \partial \Omega 
\end{align}
for some $0<c_1<c_2$. Moreover, $\beta_{\Omega}$ is characterized by the variational formula
\begin{align*} 
\beta_{\Omega} = \inf\left\{ \int_{\Omega}|\nabla \phi|^2 : \phi\in H^1_0(\Omega), \ \int_{\Omega}\phi^2=1 \right\}. 
\end{align*}
If $\beta_\Omega<1$, then $u_{\mathcal{D}}\in H^1_0(\Omega)\cap C^{2+\theta}(\overline{\Omega})$ denotes the unique positive solution of the Dirichlet logistic problem (\cite{BO86})
\begin{align} \label{Dp}
\begin{cases}
-\Delta u = u-u^{p} & \mbox{ in } \Omega, \\
u=0 & \mbox{ on } \partial\Omega. 
\end{cases}
\end{align}
The existence, nonexistence, and multiplicity of positive solutions for \eqref{p} in the case where $\beta_\Omega \neq 1$ were studied in the author's previous work \cite[Theorems 1.1, 1.2, 1.4, 1.5]{Um2022}, which Theorem \ref{thm0} summarizes. 

%
%

\setcounter{mtheorem}{-1}
\begin{mtheorem}  \label{thm0}
\strut 
\begin{enumerate} \setlength{\itemsep}{0.2cm} 
\item[(I)] A positive solution $u$ of \eqref{p} satisfies that $u<1$ in $\overline{\Omega}$ and $u>0$ on $\Gamma \subset \partial\Omega$ with the condition $|\Gamma|>0$.  

\item[(II)] There exists $\lambda_{\ast}>0$ such that problem \eqref{p} has a positive solution curve 
\begin{align} \label{C_0}
\mathcal{C}_0 = \{ (\lambda,u_\lambda) : 0\leq \lambda\leq \lambda_{\ast} \},
\end{align}
emanating from $(\lambda,u)=(0,1)$, that satisfies the following {\rm three} conditions:
\begin{itemize} \setlength{\itemsep}{0.1cm} 

\item $\lambda \mapsto u_\lambda \in C^{2+\theta}(\overline{\Omega})$ is $C^\infty$,

\item $u_\lambda > 0$ in $\overline{\Omega}$, 

\item $u_\lambda$ is asymptotically stable.

\end{itemize}
Moreover, the positive solutions of \eqref{p} near $(\lambda,u)=(0,1)$ in $\mathbb{R}\times C^{2+\theta}(\overline{\Omega})$ form $\mathcal{C}_0$. 

%
\end{enumerate}
Let $\overline{\lambda}$ be the positive value defined as 
\begin{align}  \label{ovlam}
\overline{\lambda} = \sup\{ \lambda > 0 : \mbox{ \eqref{p} has a positive solution for $\lambda$} \}. 
\end{align}
Then, the following assertions hold.

\begin{enumerate} \setlength{\itemsep}{0.2cm} 

\item[(III)] Assume that $\beta_\Omega < 1$. Then, we have the following (as in Figure \ref{figbetaleq1b}). 

\begin{enumerate} \setlength{\itemsep}{0.1cm} 
\item[(i)] $\overline{\lambda}=\infty$, and more precisely, problem \eqref{p} possesses 
a positive solution $u$ for every $\lambda>0$ such that $u>0$ in $\overline{\Omega}$.

\item[(ii)] $(\lambda, u_\lambda) \in \mathcal{C}_0$ is a unique positive solution of \eqref{p} for $0<\lambda\leq \lambda_{\ast}$ (by making  $\lambda_{\ast}$ in \eqref{C_0} smaller if necessary).  

\item[(iii)] $u_n \rightarrow u_{\mathcal{D}}$ in $H^1(\Omega)$ for a positive solution $(\lambda_n,u_n)$ of \eqref{p} with $\lambda_n\to \infty$.

\item[(iv)] The positive solution set $\{ (\lambda,u) \}$ does not meet the trivial line $\{ (\lambda, 0) : \lambda\geq0 \}$ in the topology of $H^1(\Omega)$ (nor $C(\overline{\Omega})$). 

\end{enumerate}

\item[(IV)] Assume that $\beta_\Omega>1$. Then, we have the following (as in Figure \ref{figsuperc}). 

\begin{enumerate} \setlength{\itemsep}{0.1cm} 

\item[(i)] $\overline{\lambda}<\infty$. 

\item[(ii)] There exists a bounded {\rm subcontinuum} (closed connected subset) $\widetilde{\mathcal{C}_0}=\{(\lambda,u)\}$ of nonnegative solutions of \eqref{p} in $[0, \infty)\times C(\overline{\Omega})$ joining $(\lambda,u)=(0,1)$ and $(0,0)$ such that $\widetilde{\mathcal{C}_0}\setminus \{ (0,0)\}$ includes $\mathcal{C}_0$ and consists of positive solutions of \eqref{p}. 
Particularly, problem \eqref{p} has at least two positive solutions for $\lambda>0$ small.

\item[(iii)] The positive solution set $\{ (\lambda,u)\}$ does not meet the trivial line $\{ (\lambda, 0) : \lambda>0 \}$ in the topology of $H^1(\Omega)$ (nor $C(\overline{\Omega})$). 

\item[(iv)] $\gamma_1(\lambda_n,u_n)<0$ for a positive solution $(\lambda_n,u_n)$ of \eqref{p} such that $(\lambda_n,u_n)\rightarrow (0,0)$ in $\mathbb{R}\times H^1(\Omega)$, provided that $u_n>0$ in $\overline{\Omega}$, i.e., $u_n$ is unstable.

\end{enumerate}
\end{enumerate}
\end{mtheorem}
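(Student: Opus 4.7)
Since Theorem 0 compiles results from \cite{Um2022}, the plan is to sketch the main ingredients for each item rather than reproduce every calculation. The recurring techniques are: the weak/strong maximum principle, the eigenfunction $\phi_\Omega$ as a test function, the implicit function theorem near the constant solution $u\equiv 1$, sub- and supersolution constructions, and Leray--Schauder continuation.

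For part~(I), the bound $u<1$ in $\overline{\Omega}$ is obtained by testing $-\Delta u = u-u^p$ against $(u-1)^+\in H^1(\Omega)$ and using $\partial u/\partial\nu = -\lambda u^q \leq 0$ on $\partial\Omega$, which yields $\int_\Omega|\nabla(u-1)^+|^2+\int_\Omega(u^p-u)(u-1)^+\leq 0$, forcing $(u-1)^+\equiv 0$; the strict inequality then follows from the strong maximum principle since $u<1$ in the interior. For the boundary positivity $u>0$ on $\Gamma\subset\partial\Omega$ with $|\Gamma|>0$, one argues by contradiction: if $u=0$ a.e.\ on $\partial\Omega$, then $u\in H^1_0(\Omega)$ is a nontrivial nonnegative solution of the Dirichlet logistic equation, and testing against $\phi_\Omega$ combined with the bound $u<1$ gives an algebraic identity inconsistent with $\beta_\Omega\ne 1$ or with the uniqueness result for \eqref{Dp}.

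For part~(II), apply the implicit function theorem to the smooth map $F(\lambda,u)=(-\Delta u-u+u^p,\ \partial u/\partial\nu+\lambda u^q)$ at $(\lambda,u)=(0,1)$; smoothness of $u\mapsto u^q$ holds on a neighborhood of $u\equiv 1$, and the Fréchet derivative $D_uF(0,1)v=(-\Delta v+(p-1)v,\ \partial v/\partial\nu)$ is an isomorphism because $p-1>0$ makes the Neumann operator invertible. This yields the curve $\mathcal{C}_0$ and the local uniqueness statement of (III)(ii); asymptotic stability along $\mathcal{C}_0$ follows from continuity of $\gamma_1$ together with $\gamma_1(0,1)=p-1>0$ (achieved on the constant eigenfunction). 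For part~(III)(i), the constant $1$ is a supersolution for every $\lambda\geq 0$, and $\varepsilon\phi_\Omega$ is a subsolution for $\varepsilon>0$ small since $\beta_\Omega<1$ and since $\phi_\Omega$ vanishes on $\partial\Omega$; the monotone iteration then produces positive solutions for all $\lambda>0$. Assertion (III)(iii) comes from an a~priori $H^1$ bound obtained by testing \eqref{def} with $u_n$: boundedness of $\lambda_n\int_{\partial\Omega}u_n^{q+1}$ forces $u_n|_{\partial\Omega}\to 0$ in $L^{q+1}(\partial\Omega)$, identifying the limit as the unique positive Dirichlet solution $u_{\mathcal{D}}$, and (III)(iv) follows from part~(I).

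For part~(IV), the finite bound $\overline{\lambda}<\infty$ is obtained by testing $-\Delta u = u-u^p$ against $\phi_\Omega$: integration by parts produces a boundary term involving $-\partial\phi_\Omega/\partial\nu$ and yields the identity $(\beta_\Omega-1)\int_\Omega u\phi_\Omega+\int_\Omega u^p\phi_\Omega+\lambda\int_{\partial\Omega}u^q(-\partial\phi_\Omega/\partial\nu)=0$ (up to signs), so the sign condition $\beta_\Omega>1$ together with the bounds in \eqref{bdry1513} and $u<1$ forces $\lambda$ to stay bounded. The subcontinuum $\widetilde{\mathcal{C}_0}$ is constructed by globally continuing $\mathcal{C}_0$ via the Leray--Schauder alternative: a~priori bounds rule out escape to $\|u\|=\infty$ and to $\lambda=\infty$, so the continuum must meet the trivial line, and part~(I) shows that this can only happen at $(0,0)$. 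Assertion (IV)(iii) is again part~(I), and (IV)(iv) follows from the variational characterization of $\gamma_1$: along $u_n\to 0$ with $u_n>0$ on $\overline{\Omega}$, inserting the positive test function $u_n$ itself and using $\lambda_n\int_{\partial\Omega}q u_n^{q-1}u_n^2=\lambda_n q\int_{\partial\Omega}u_n^{q+1}$ together with the weak formulation shows that the boundary term drives $\gamma_1<0$.

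The main obstacle throughout is the concave boundary nonlinearity $u^q$ with $0<q<1$: it fails to be Lipschitz at $u=0$, so Hopf's boundary point lemma cannot be used to conclude $u>0$ on the entire boundary, classical bifurcation-from-zero theorems do not apply directly, and the standard comparison and uniqueness machinery needs adaptation. The most delicate step is the construction of the subcontinuum $\widetilde{\mathcal{C}_0}$ in~(IV)(ii), where one must combine Whyburn-type connectedness arguments with careful degree computations near the trivial branch to show that the continuum actually returns to $(0,0)$ rather than being truncated by loss of compactness at the boundary.
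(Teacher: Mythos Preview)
Your sketch is broadly in the right spirit (implicit function theorem at $(0,1)$, sub/supersolutions, normalized limits), and since Theorem~\ref{thm0} is only quoted from \cite{Um2022} in this paper there is no in-text proof to compare against line by line. However, several of your pointers are genuinely wrong, not just terse.

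The most concrete error is in (IV)(i). Testing against $\phi_\Omega$ cannot produce the term $\lambda\int_{\partial\Omega}u^q\bigl(-\partial\phi_\Omega/\partial\nu\bigr)$ that you wrote down: since $\phi_\Omega=0$ on $\partial\Omega$, the boundary contribution in the weak formulation \eqref{def} vanishes, and Green's identity only yields $(\beta_\Omega-1)\int_\Omega u\phi_\Omega+\int_\Omega u^p\phi_\Omega=\int_{\partial\Omega}u\bigl(-\partial\phi_\Omega/\partial\nu\bigr)$, with no $\lambda$ anywhere. So this identity gives no upper bound on $\lambda$. The actual mechanism for $\overline{\lambda}<\infty$ when $\beta_\Omega>1$ is the normalized-sequence argument you see in Proposition~\ref{prop:asympt} of this paper: if $\lambda_n\to\infty$ one first shows $u_n\to 0$ in $H^1(\Omega)$ (the limiting Dirichlet problem \eqref{Dp} has no positive solution when $\beta_\Omega>1$), and then $w_n=u_n/\|u_n\|$ is forced to converge to a nontrivial $H^1_0$ solution of $-\Delta w=w$, contradicting $\beta_\Omega\neq 1$.

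The same remark applies to your claim that (III)(iv) and (IV)(iii) ``follow from part~(I)''. Part~(I) only says $u<1$ and $u>0$ on a boundary set of positive measure; it does not prevent a sequence of positive solutions from converging to $0$. The correct argument is again the blow-up $w_n=u_n/\|u_n\|$ together with $E(w_n)\le 0$ and the boundary estimate $\int_{\partial\Omega}w_n^{q+1}\le \lambda_n^{-1}\|u_n\|^{1-q}\int_\Omega w_n^2\to 0$, which forces $w_\infty\in H^1_0(\Omega)\setminus\{0\}$ to satisfy $-\Delta w_\infty=w_\infty$, impossible when $\beta_\Omega\neq 1$. Finally, for (III)(i) the subsolution $\varepsilon\phi_\Omega$ vanishes on $\partial\Omega$, so it yields a positive solution $u$ but not the claimed strong positivity $u>0$ in $\overline{\Omega}$; one needs the perturbed family $\phi_\varepsilon=\varepsilon(\phi_\Omega+\varepsilon^\tau)$ with $\tau>\frac{1-q}{q}$ as in Lemma~\ref{prop:sub} of this paper to get a subsolution that is strictly positive on the boundary.
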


%
%

\setcounter{mrem}{-1}
\begin{mrem}  \label{rem0}
\strut 
\begin{enumerate} \setlength{\itemsep}{0.1cm} 

\item Assertions (I) and (II) hold for every case of $\beta_\Omega > 0$. 

\item Assertions (II) and (III-i) hold for any $p>1$. 


\item Problem \eqref{p} with $\lambda = 0$ has exactly two nonnegative solutions $(\lambda,u)=(0,0), (0,1)$. Thus, Theorem \ref{thm0}(I) is used to show easily that 
in every case of $\beta_\Omega > 0$, the positive solution set $\{ (\lambda,u)\}$ of \eqref{p} meets {\it at most} $(0,0)$ and $(0,1)$ on $\{ (0,u) : u\geq0 \}$, i.e., if $(\lambda_n,u_n)$ is a positive solution of \eqref{p} such that $(\lambda_n,u_n) \rightarrow (0,u)$ in $H^1(\Omega)$ (equivalently $C(\overline{\Omega})$ by elliptic regularity), then either $u=0$ or $1$. 

\end{enumerate} 
\end{mrem} 

In this paper, we extend our consideration to the case where $\beta_\Omega=1$ and further study the positive solution set in the case where $\beta_\Omega < 1$. 
Our first main result concerns the case where $\beta_\Omega<1$. 
On the basis of Theorem \ref{thm0}(III), we present the uniqueness and stability of a positive solution of \eqref{p} for $\lambda > 0$ {\it large} and also the {\it strong positivity} of the positive solutions for {\it every} $\lambda>0$. 

\begin{theorem} \label{thm:leq1}
Assume that $\beta_\Omega<1$. Then, 
the following assertions hold (see Figure \ref{figbetaleq1b}): 

\begin{enumerate} \setlength{\itemsep}{0.2cm} 
\item There exists $\lambda^{\ast}\geq \lambda_{\ast}$ such that the positive solution of \eqref{p} ensured by Theorem \ref{thm0}(III-i) is {\rm unique} for every $\lambda>\lambda^{\ast}$ (say $u_\lambda$); more precisely, the positive solutions of \eqref{p} for $\lambda>\lambda^{\ast}$ form a $C^\infty$ curve $\mathcal{C}_\infty = \{ (\lambda,u_\lambda) : \lambda^{\ast}<\lambda \}$ (i.e., $\lambda \mapsto u_\lambda \in C^{2+\theta}(\overline{\Omega})$ is $C^\infty$), which satisfies the following conditions: 

\vspace{3pt}

\begin{enumerate} \setlength{\itemsep}{0.1cm} 

\item $u_\lambda$ is asymptotically stable,

\item $u_\lambda \longrightarrow u_{\mathcal{D}}$ in $H^1(\Omega)$ as $\lambda \to \infty$,

\item $u_\lambda$ is decreasing, i.e., $u_{\lambda_1}>u_{\lambda_2}$ in $\overline{\Omega}$ if $\lambda_1<\lambda_2$. Furthermore, if $0<\lambda_1<\lambda_2$ with the condition that $\lambda_1\leq \lambda^{\ast}<\lambda_2$, then $u>u_{\lambda_2}$ in $\overline{\Omega}$ for a positive solution $u$ of \eqref{p} for $\lambda=\lambda_1$. 
\end{enumerate}

\item $u>0$ in $\overline{\Omega}$ for a positive solution $u$ of \eqref{p} for every $\lambda>0$ {\rm (strong positivity)}.  

\end{enumerate} 

\end{theorem}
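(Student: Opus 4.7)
Let $\alpha_{\mathcal{D}}>0$ denote the smallest Dirichlet eigenvalue of the linearization $-\Delta-1+p u_{\mathcal{D}}^{p-1}$ on $\Omega$, positive by the nondegeneracy of $u_{\mathcal{D}}$. The strategy is to construct $\mathcal{C}_\infty$ by an implicit function argument driven by asymptotic stability, derive monotonicity and the cross-comparison in (1c), and then read off (2) as a corollary.

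I would first upgrade the $H^1$-convergence of Theorem \ref{thm0}(III-iii) to $u_\lambda\to u_{\mathcal{D}}$ uniformly on $\overline{\Omega}$ and in $C^{2+\theta}_{\mathrm{loc}}(\Omega)$, using $u_\lambda\le 1$ from Theorem \ref{thm0}(I) together with interior elliptic regularity. Because $u_\lambda\to 0$ on $\partial\Omega$ and $q<1$, the boundary coefficient $\lambda q u_\lambda^{q-1}$ in the Rayleigh quotient for $\gamma_1(\lambda,u_\lambda)$ diverges to $+\infty$, so a concentration/$\Gamma$-convergence argument yields $\gamma_1(\lambda,u_\lambda)\to\alpha_{\mathcal{D}}>0$; this is assertion (1a). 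Invertibility of the linearized Robin operator then supplies a local $C^\infty$ curve through each such point via the implicit function theorem. Uniqueness for $\lambda$ large follows by contradiction: if $u_n\neq v_n$ were distinct positive solutions at $\lambda_n\to\infty$, the mean value theorem would place $w_n=v_n-u_n\neq 0$ in the kernel of the linear operator with interior coefficient $p\xi_n^{p-1}$ and Robin penalty $\lambda_n q\eta_n^{q-1}$ (with $\xi_n,\eta_n$ between $u_n$ and $v_n$), whose first eigenvalue again tends to $\alpha_{\mathcal{D}}>0$ --- a contradiction. This pins down $\lambda^{\ast}$ and glues the local pieces into the global $\mathcal{C}_\infty$. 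Assertion (1b) is inherited from Theorem \ref{thm0}(III-iii); the first part of (1c) comes from differentiating along $\mathcal{C}_\infty$: $\dot u_\lambda$ solves the linearized homogeneous interior equation with boundary source $-u_\lambda^q<0$, and the strong positivity of the Robin inverse (available since $\gamma_1>0$, in the spirit of Amann's maximum principle) yields $\dot u_\lambda<0$ on $\overline{\Omega}$.

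For the cross-comparison in the last sentence of (1c) --- whence part (2) --- fix $\lambda_1\le\lambda^{\ast}<\lambda_2$ and any positive solution $u$ of \eqref{p} at $\lambda_1$. A direct computation shows $\partial_\nu u+\lambda_2 u^q=(\lambda_2-\lambda_1)u^q\ge 0$, nontrivial by Theorem \ref{thm0}(I), so $u$ is a strict weak supersolution of \eqref{p} at $\lambda_2$. I would replace $v^q$ by the Lipschitz function $(v+\varepsilon)^q$ (with respect to which $u$ remains a supersolution, since $\lambda_2(u+\varepsilon)^q\ge\lambda_2 u^q\ge\lambda_1 u^q$), run a monotone iteration downward from $u$ to obtain a solution $\tilde u_\varepsilon\le u$ of the regularized problem, and invoke an implicit function perturbation at $(\lambda_2,u_{\lambda_2})$ --- whose linearization is invertible because $\gamma_1(\lambda_2,u_{\lambda_2})>0$ --- to identify $\lim_{\varepsilon\to 0^{+}}\tilde u_\varepsilon=u_{\lambda_2}$. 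This yields $u\ge u_{\lambda_2}$; strict inequality $u>u_{\lambda_2}$ on $\overline{\Omega}$ then follows from the strong maximum principle and Hopf applied to the linear Robin equation for $w:=u-u_{\lambda_2}\ge 0$, whose boundary source $(\lambda_2-\lambda_1)u^q$ is nontrivial and whose Robin coefficient is bounded thanks to $\eta\ge u_{\lambda_2}\ge c>0$. Part (2) is then immediate: for $\lambda\le\lambda^{\ast}$, choosing any $\lambda_2>\lambda^{\ast}$ gives $u>u_{\lambda_2}>0$ on $\overline{\Omega}$; for $\lambda>\lambda^{\ast}$ uniqueness forces $u=u_\lambda\in\mathcal{C}_\infty$, strictly positive by construction. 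The main obstacle is exactly the construction of $\tilde u_\varepsilon$ with the correct limit: the regularization must simultaneously preserve the supersolution property of $u$ at $\lambda_2$ and provide the Lipschitz control needed to keep the monotone iterates nonnegative, reflecting the failure of the one-sided Lipschitz condition at $u=0$ flagged in the introduction.
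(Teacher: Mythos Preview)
Your treatment of uniqueness, (1a), (1b), and the $C^\infty$-curve structure of $\mathcal{C}_\infty$ is essentially the paper's argument (Propositions \ref{prop:asympt} and \ref{prop:unista}), and your derivative approach to the first half of (1c) is a legitimate alternative to the paper's sub/supersolution comparison in Corollary \ref{cor:curve}.

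The genuine gap lies in your route to the cross-comparison in (1c) and hence to (2). The regularization $(v+\varepsilon)^q$ destroys the candidate subsolution $v=0$ on $\partial\Omega$ (since $\partial_\nu 0 + \lambda_2 \varepsilon^q>0$), so your downward monotone iteration from $u$ has no floor; you flag this yourself as ``the main obstacle'', and it is not overcome. Even granting that some $\tilde u_\varepsilon\le u$ exists, the implicit function theorem at $(\lambda_2,u_{\lambda_2})$ only produces \emph{a} solution of the regularized problem near $u_{\lambda_2}$ for small $\varepsilon$; it does not identify that solution with the one obtained by iteration from $u$, which a priori could lie anywhere in $[0,u]$. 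The paper avoids both difficulties by reversing your logical order: it proves (2) first and independently (Proposition \ref{pop:posi}) via a compactly supported weak subsolution $\Phi_\varepsilon$ built from the first Dirichlet eigenfunction of a subdomain $\Omega_1\Subset\Omega$ with $\beta_{\Omega_1}<1$. Because $\Phi_\varepsilon$ vanishes outside $\Omega_1$, the inequality $\Phi_\varepsilon\le u$ requires only $u>0$ in $\Omega$, not on $\overline\Omega$, so the weak sub/supersolution method at a large parameter traps a positive solution under $u$; uniqueness there forces it to coincide with the strictly positive one furnished by Proposition \ref{prop:maxmin}, contradicting any boundary zero of $u$. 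Once (2) is in hand, the cross-comparison in (1c) follows from the ordered pair $(\phi_\varepsilon,u)$ of Lemma \ref{prop:sub} together with uniqueness at $\lambda_2$---no regularization needed.
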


		 \begin{figure}[!htb]
	  	   \begin{center}
			\includegraphics[scale=0.18]{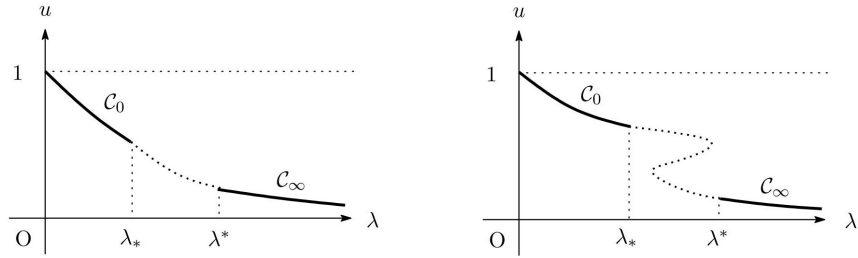} 
		  \caption{Possible positive solution sets in the case where  $\beta_\Omega<1$.} 
		\label{figbetaleq1b} 
		  \end{center}
		    \end{figure}

\begin{rem} 
\strut
\begin{enumerate}\setlength{\itemsep}{0.1cm} 



\item For $(\lambda, u_\lambda)\in \mathcal{C}_0$ with $0\leq \lambda \leq \lambda_{\ast}$ in \eqref{C_0}, we present similar results as those in assertions (i-c). 
Indeed, $\lambda \mapsto u_\lambda$ is decreasing for $0<\lambda \leq \lambda_{\ast}$; if $0<\lambda_1<\lambda_2$ with the condition that $\lambda_1\leq\lambda_{\ast}< \lambda_2$, then $u_{\lambda_1}>u$ in $\overline{\Omega}$ for a positive solution $u$ of \eqref{p} with $\lambda=\lambda_2$. 

\item It is an open question to get the global uniqueness for a positive solution of \eqref{p} for all $\lambda>0$, i.e., $\lambda_{\ast}=\lambda^{\ast}$. In this case, $[0,\infty)\ni \lambda \mapsto u_\lambda$ is decreasing. 

\item For uniqueness and stability analysis of positive solutions for large parameters in nonlinear elliptic problems, we refer to \cite{Da1984, Wi1985, Wib1985, Li1985, Da1986, MiSu1997, GM2004, HS2006, HS2013, HS2016}. 

\end{enumerate}
\end{rem}

Our second main result is the counterpart of Theorem \ref{thm0}(III-i) and (III-iii) for the case where $\beta_\Omega =1$ and $pq>1$. 

\begin{theorem} \label{thm:=1<pq}  
Assume that $\beta_\Omega=1$ and $pq>1$.  Then, problem \eqref{p} possesses a positive solution $u_\lambda$ for every $\lambda > 0$ such that $u_\lambda>0$ in $\overline{\Omega}$, which satisfies that  
\begin{align} \label{tophOm}
u_\lambda \longrightarrow 0 \quad\mbox{ and } \ \  
\frac{u_\lambda}{\| u_\lambda \|} \longrightarrow \phi_\Omega \quad \mbox{ in } H^1(\Omega) 
\quad \mbox{ as } \lambda \to \infty. 
\end{align}
\end{theorem}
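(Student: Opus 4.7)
The plan is to prove existence of a positive solution $u_\lambda>0$ on $\overline{\Omega}$ for each $\lambda>0$ by the sub- and supersolution method, and to analyze the asymptotics as $\lambda\to\infty$ through energy estimates combined with the Brezis--Oswald uniqueness for the Dirichlet logistic problem in the critical case $\beta_\Omega=1$. The supersolution is the trivial choice $\overline{u}\equiv 1$.

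The core difficulty is to construct a positive subsolution below $1$ that works for every $\lambda>0$ and, in particular, for $\lambda$ arbitrarily large. I would try the two-parameter ansatz $\underline{u}_\lambda = \epsilon\phi_\Omega + s$ with $s = C\epsilon^p$, where $C>0$ depends only on $\phi_\Omega$ and $p$. Since $\beta_\Omega=1$, the identity $-\Delta \underline{u}_\lambda = \epsilon\phi_\Omega$ reduces the PDE subsolution condition to $(\epsilon\phi_\Omega+s)^p \leq s$ pointwise in $\Omega$, which holds for all $\epsilon \leq \epsilon_0$ with some universal threshold $\epsilon_0>0$ depending only on $\phi_\Omega$ and $p$. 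On $\partial\Omega$, where $\underline{u}_\lambda = s$ and $\partial_\nu\underline{u}_\lambda \leq -c_1\epsilon$ by \eqref{bdry1513}, the boundary subsolution condition reduces to $c_1\epsilon \geq \lambda s^q = \lambda C^q \epsilon^{pq}$, i.e.\ $\epsilon^{pq-1} \leq c_1/(\lambda C^q)$. Here the hypothesis $pq>1$ is essential: the positive exponent $pq-1$ makes it possible to choose $\epsilon$ small enough for any $\lambda>0$. Setting $\epsilon_\lambda := \min\{\epsilon_0,\, (c_1/(\lambda C^q))^{1/(pq-1)}\}$ gives $0<\underline{u}_\lambda\leq\overline{u}$, and the standard sub-super machinery (the boundary nonlinearity $u\mapsto \lambda u^q$ is Lipschitz on $[s_\lambda,1]$) produces a positive solution $u_\lambda \geq s_\lambda>0$ on $\overline{\Omega}$. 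This subsolution construction is the main obstacle; it is precisely where $\beta_\Omega=1$ and $pq>1$ enter decisively.

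For $u_\lambda\to 0$ in $H^1(\Omega)$, I would take an arbitrary sequence of positive solutions $u_n$ at $\lambda_n\to\infty$. By Theorem~\ref{thm0}(I), $u_n<1$ on $\overline{\Omega}$, so testing \eqref{def} with $\varphi=u_n$ yields
\begin{equation*}
\int_\Omega |\nabla u_n|^2 + \int_\Omega u_n^{p+1} + \lambda_n \int_{\partial\Omega} u_n^{q+1} = \int_\Omega u_n^2 \leq |\Omega|,
\end{equation*}
so $\{u_n\}$ is bounded in $H^1(\Omega)$ and $\int_{\partial\Omega} u_n^{q+1} = O(1/\lambda_n)$. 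A weak $H^1$-limit $u$ along a subsequence lies in $H^1_0(\Omega)$ by compactness of the trace into $L^{q+1}(\partial\Omega)$, and it weakly solves the Dirichlet logistic problem \eqref{Dp}; since $\beta_\Omega=1$, testing with $u$ forces $\int_\Omega u^{p+1}\leq 0$, whence $u\equiv 0$. Strong $H^1$-convergence then follows from the energy identity above, and since every subsequence has a further subsequence with the same limit, $u_\lambda\to 0$ in $H^1$.

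For the profile convergence, I would rescale $w_n := u_n/\|u_n\|_{H^1}$, so that $w_n$ solves a rescaled problem with coefficients $\|u_n\|^{p-1}\to 0$ in the interior (mild) and $\lambda_n\|u_n\|^{q-1}\to\infty$ on the boundary (singular, as $q<1$). Testing with $w_n$, the resulting energy identity forces $\int_{\partial\Omega} w_n^{q+1}\to 0$, so the weak $H^1$-limit $w$ lies in $H^1_0(\Omega)$. Passing to $\liminf$ yields $\int_\Omega |\nabla w|^2 \leq \int_\Omega w^2$, while the variational characterization of $\beta_\Omega=1$ on $H^1_0$ gives the reverse inequality; simplicity of $\beta_\Omega$ then identifies $w$ as a nonnegative multiple of $\phi_\Omega$. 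The same energy identity upgrades weak to strong $H^1$-convergence, and the constraint $\|w\|_{H^1}=1$ pins down $w=\phi_\Omega$ under the normalization $\|\phi_\Omega\|_{H^1}=1$. Uniqueness of the limit gives convergence of the full family, yielding \eqref{tophOm}.
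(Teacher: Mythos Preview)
Your proof is correct and follows essentially the same approach as the paper: existence via a small positive subsolution built from $\phi_\Omega$ paired with the supersolution $1$, and the asymptotics \eqref{tophOm} via the energy identity obtained by testing with $u_n$ (giving $u_n\to 0$ in $H^1$) and then with the normalized $w_n=u_n/\|u_n\|$ (giving $w_n\to\phi_\Omega$). The only cosmetic difference is the subsolution parametrization: the paper uses $\varepsilon(\phi_\Omega+\varepsilon^\tau)$ with $\frac{1-q}{q}<\tau<p-1$ (the nonemptiness of this interval being equivalent to $pq>1$), whereas you take the borderline exponent $\tau=p-1$ and absorb the resulting constraint into a constant $C>(\max\phi_\Omega)^p$; both yield the same boundary condition $\varepsilon^{pq-1}\lesssim 1/\lambda$, which is exactly where $pq>1$ enters.
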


\begin{rem}
\strut
\begin{enumerate}\setlength{\itemsep}{0.1cm} 
\item The existence assertion holds for any $p>1$; thus, so does assertion \eqref{tophOm} (see Remark \ref{rem:p3.2}). 

\item Similarly as in Theorem \ref{thm0}(III-iii), assertion \eqref{tophOm} is valid if we assume a positive solution $(\lambda, u_\lambda)$ (which may take zero value somewhere on $\partial \Omega$) of \eqref{p} with $\lambda \rightarrow \infty$. 
\end{enumerate}
\end{rem}


Our third main result is the counterpart of Theorem \ref{thm0}(III-iv), (IV-i), and (IV-iii) for the case where $\beta_\Omega=1$. 

\begin{theorem}  \label{thm:nobif}  
Assume that $\beta_\Omega = 1$. Then, the following three assertions hold. 
\begin{enumerate} \setlength{\itemsep}{0.2cm} 

\item If $pq\leq1$, then $\overline{\lambda}<\infty$ where $\overline{\lambda}>0$ is defined by \eqref{ovlam}. 

\item If $pq\neq 1$, then the positive solution set 
$\{ (\lambda, u)\}$ of \eqref{p} does not meet the trivial line $\{ (\lambda, 0) : \lambda>0 \}$ in the topology of $H^1(\Omega)$ (nor $C(\overline{\Omega})$).  

\item If $pq \geq 1$, then it does not meet $\{ (0,0) \}$ in the topology of $H^1(\Omega)$ (nor $C(\overline{\Omega})$).

\end{enumerate}
\end{theorem}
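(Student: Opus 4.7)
The plan is to deduce all three assertions from a single asymptotic rate valid at $\beta_\Omega=1$: for every sequence $(\lambda_n,u_n)$ of positive solutions of \eqref{p} with $\|u_n\|_{L^2(\Omega)}\to 0$, there holds
\begin{align*}
\lambda_n\,\|u_n\|_{L^2(\Omega)}^{\,pq-1}\longrightarrow C_0
\end{align*}
for an explicit positive constant $C_0$ determined by $\phi_\Omega$, $p$, $q$; call this relation $(\star)$. Once $(\star)$ is in hand, each part follows from a one-line contradiction. For (i), if $\overline{\lambda}=\infty$ one obtains solutions $(\lambda_n,u_n)$ with $\lambda_n\to\infty$; testing \eqref{p} against $\varphi=1$ yields $\lambda_n\int_{\partial\Omega}u_n^q=\int_\Omega(u_n-u_n^p)\le|\Omega|$, so $\int_{\partial\Omega}u_n^q\to 0$, and a compactness argument in the spirit of Theorem~\ref{thm:=1<pq} (the sequence $u_n$ is $H^1$-bounded by the energy identity, any weak limit lies in $H^1_0(\Omega)$ and solves the Dirichlet logistic equation, which at $\beta_\Omega=1$ admits only the zero solution) forces $u_n\to 0$ in $H^1(\Omega)$. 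Then $(\star)$ would give $\lambda_n\|u_n\|^{pq-1}\to C_0>0$, but for $pq\le 1$ the left-hand side tends to $+\infty$ (since $\lambda_n\to\infty$ and $\|u_n\|^{pq-1}\ge 1$), a contradiction. For (ii), a sequence $(\lambda_n,u_n)\to(\lambda_0,0)$ with $\lambda_0>0$ plugged into $(\star)$ gives $\|u_n\|^{pq-1}\to C_0/\lambda_0>0$, which is incompatible with $\|u_n\|\to 0$ whenever $pq\ne 1$. For (iii), $(\lambda_n,u_n)\to(0,0)$ yields $\|u_n\|^{pq-1}\to\infty$, incompatible with $\|u_n\|\to 0$ when $pq\ge 1$.

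\noindent\textbf{Proof of $(\star)$.}
Normalize $v_n:=u_n/\|u_n\|_{L^2}$; the energy identity gives $\|\nabla v_n\|_{L^2}\le 1$, so up to a subsequence $v_n\rightharpoonup v$ in $H^1(\Omega)$, strongly in $L^r(\Omega)$ and $L^s(\partial\Omega)$ in the appropriate subcritical ranges, with $v\ge 0$ and $\|v\|_{L^2}=1$. Testing \eqref{p} against $\phi_\Omega$ and using $-\Delta\phi_\Omega=\phi_\Omega$ and $\phi_\Omega|_{\partial\Omega}=0$ produces the interior--boundary identity
\begin{align*}
\int_{\partial\Omega}u_n(-\partial_\nu\phi_\Omega)=\int_\Omega u_n^p\phi_\Omega,
\end{align*}
which, divided by $\|u_n\|_{L^2}$ and combined with \eqref{bdry1513}, forces $\int_{\partial\Omega}v=0$; hence $v\in H^1_0(\Omega)$. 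Passing to the limit in the rescaled weak formulation with $H^1_0$-test functions gives $-\Delta v=v$ in $\Omega$, so by simplicity of $\beta_\Omega=1$, $v=\phi_\Omega/\|\phi_\Omega\|_{L^2}$. Elliptic bootstrap on the rescaled equation for $v_n$, together with the $W^{1,r}$-estimates from \cite{Ro2005}, should upgrade this to $v_n\to v$ in $C^1(\overline\Omega)$. Setting $\mu_n:=\lambda_n\|u_n\|^{q-1}$, the rescaled boundary condition $\partial_\nu v_n=-\mu_n v_n^q$ together with $\partial_\nu v_n\to\partial_\nu v<0$ then pins down the uniform pointwise profile
\begin{align*}
v_n(x)=\mu_n^{-1/q}\big(-\partial_\nu v(x)\big)^{1/q}(1+o(1))\quad\text{on }\partial\Omega.
\end{align*}
Inserting this into the rescaled form of the identity above, the left-hand side is of order $\mu_n^{-1/q}$ and the right-hand side of order $\|u_n\|^{p-1}$; matching leading orders yields $\mu_n\sim C\|u_n\|^{-q(p-1)}$, which upon substituting $\mu_n=\lambda_n\|u_n\|^{q-1}$ is exactly $(\star)$.

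\noindent\textbf{Main obstacle.}
The delicate step is the upgrade from weak $H^1$-convergence to uniform $C^1(\overline\Omega)$-convergence of $v_n$: the Robin-type boundary condition satisfied by $v_n$ has coefficient $\mu_n\to\infty$, so standard Schauder estimates do not apply directly. I expect this will require either uniform boundary barriers capturing the thin layer of width $\sim\mu_n^{-1/q}$ adjacent to $\partial\Omega$, or a Moser-type iteration tailored to this singular boundary condition in the spirit of \cite{Ro2005}.
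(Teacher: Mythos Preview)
Your unifying relation $(\star)$ is attractive, and the deductions of (i)--(iii) from it are logically clean. The problem is exactly where you locate it: the upgrade $v_n\to v$ in $C^1(\overline\Omega)$ is a genuine gap, not a technicality. The rescaled solution satisfies $\partial_\nu v_n=-\mu_n v_n^q$ on $\partial\Omega$ with $\mu_n=\lambda_n\|u_n\|^{q-1}\to\infty$, so there is no uniform bound on the boundary data to feed into Schauder or $W^{1,r}$ estimates; the argument you sketch is circular (a $C^1$ bound on $v_n$ would bound $\mu_n v_n^q=|\partial_\nu v_n|$, but that is what you are trying to obtain). The boundary-layer heuristic $v_n\sim\mu_n^{-1/q}(-\partial_\nu v)^{1/q}$ already presupposes that $\partial_\nu v_n\to\partial_\nu v$ \emph{uniformly} on $\partial\Omega$, which is stronger still. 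Without either a barrier construction that explicitly resolves the layer of width $\mu_n^{-1/q}$ or an a~priori estimate independent of $\mu_n$, the derivation of $(\star)$ does not go through, and nothing in \cite{Ro2005} supplies such an estimate.

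The paper bypasses this entirely. Instead of seeking pointwise boundary information, it works with the orthogonal decomposition $u_n=s_n\phi_\Omega+v_n\in\langle\phi_\Omega\rangle\oplus V$ and extracts \emph{integral} control of the remainder $v_n$: testing with $\varphi=u_n$, $\varphi=\phi_\Omega$, and $\varphi=1$, together with H\"older's inequality on $\partial\Omega$, yields $E(v_n)+c\int_{\partial\Omega}v_n^{q+1}\le 0$ for large $n$ (Lemma~\ref{prop:E}, with the three cases $pq<1$, $pq=1$, $pq>1$ handled separately). Normalizing $v_n/\|v_n\|$ and repeating the weak-limit argument then forces the limit into $\langle\phi_\Omega\rangle\cap V=\{0\}$, a contradiction. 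Part~(iii) uses a different rescaling $U_n=\lambda_n^{-1/(1-q)}u_n$ and Lemmas~\ref{lem1809}--\ref{lem1813}. None of this requires anything beyond weak $H^1$ convergence and trace compactness; the precise rate $(\star)$ is never established, but it is also never needed for the stated conclusions.
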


Theorem \ref{thm:nobif} provides a guess (Remark \ref{rem:compo}) for the global extension of the $C^\infty$ positive solution curve $\mathcal{C}_0$ given by \eqref{C_0} in the case where $\beta_\Omega=1$ and $pq\leq1$. 

\begin{rem}  \label{rem:compo}
Assume that $\beta_\Omega = 1$ and $pq\leq1$. Let $\widetilde{\mathcal{C}}_0=\{ (\lambda, u) \} \subset [0, \infty)\times C(\overline{\Omega})$ be the {\it component} (maximal, closed, and connected subset) of nonnegative solutions of \eqref{p} that includes $\mathcal{C}_0$. From Theorems \ref{thm0}(I) and Theorem \ref{thm:nobif}(i), $\widetilde{\mathcal{C}}_0\setminus \{ (0,1)\} \subset \{ (\lambda,u) \in [0,\infty) \times C(\overline{\Omega}) : \lambda \leq \overline{\lambda}, \ 
u<1 \mbox{ in } \overline{\Omega} \}$. 
If we suppose that 
$\Gamma_0:= \left( \widetilde{\mathcal{C}}_0 \setminus \{ (0,1)\} \right) \cap \left\{ (\lambda,0), (0,u) : \lambda\geq 0, \, u\geq 0 \right\} \neq \emptyset$, 
then Theorem \ref{thm:nobif}(ii),(iii) show that $\Gamma_0 = \{ (0,0)\}$ and $\Gamma_0 \subset \{ (\lambda, 0) : \lambda \geq \Lambda_0 \}$ for some $\Lambda_0>0$ when $pq<1$ and $pq=1$, respectively. 
The existence of $\widetilde{\mathcal{C}_0}$ is still an open question. 
Suggested positive solution sets are illustrated in Figures \ref{figsuperc} and \ref{figcri}. 
\end{rem}

		 \begin{figure}[!htb]
	  	   \begin{center}
			\includegraphics[scale=0.18]{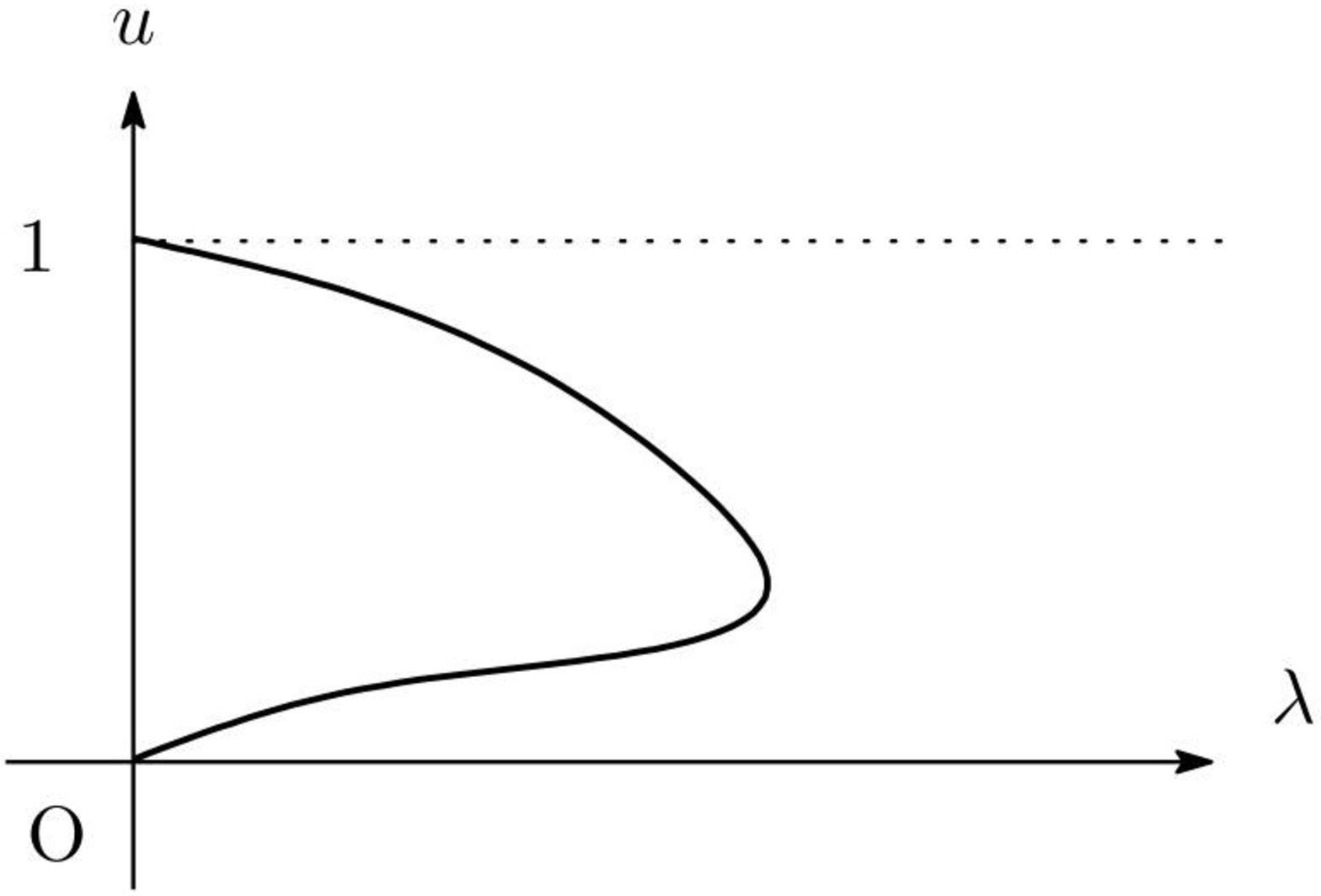} 
		  \caption{Suggested positive solution set in the case where  $\beta_\Omega =1$ and $pq<1$.} 
		\label{figsuperc} 
		  \end{center}
		    \end{figure}

		 \begin{figure}[!htb]
	  	   \begin{center}
			\includegraphics[scale=0.18]{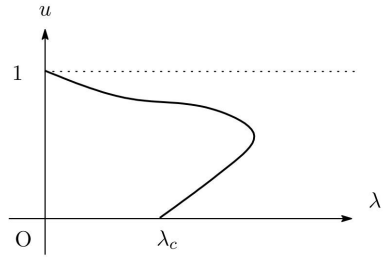}  
		  \caption{Suggested positive solution set in the case where $\beta_\Omega = 1$ and $pq=1$, and $\lambda_c\in \Gamma_0$.} 
		\label{figcri} 
		  \end{center}
		    \end{figure}

We conclude the Introduction by mentioning the stability of the trivial solution $u=0$. A linearized stability analysis does not work for $u=0$ because $u \mapsto u^q$ is not differentiable at $u=0$. 
Instead, by the construction of suitable sub- and supersolutions of \eqref{p}, we try to employ the Lyapunov stability criterion \cite[Chapter 5]{Pa92} on the basis of the monotone iteration method, which is developed in Section \ref{sec:trista}.


\begin{description}
\item[Notation] 
$\| \cdot\|$ denotes the usual norm of $H^1(\Omega)$. 
$u_n \rightharpoonup u_\infty$ means that $u_n$ weakly converges to $u_\infty$ in $H^1(\Omega)$. 
$H^1_0(\Omega)=\{ u\in H^1(\Omega) : u=0 \mbox{ on } \partial\Omega\}$. 
$\int_{\Omega} f dx$ for $f \in L^1(\Omega)$ and $\int_{\partial\Omega}g d\sigma$ for $g\in L^1(\partial\Omega)$ are simply written as $\int_{\Omega}f$ and $\int_{\partial\Omega}g$, respectively. 
$|\cdot|$ represents both the Lebesgue measure in $\Omega$ and the surface measure on $\partial\Omega$. 
\end{description}

The remainder of this paper is organized as follows. Sections \ref{sec:ener} and  \ref{sec:sub} are devoted to the preparation for the proofs of Theorems \ref{thm:leq1}, \ref{thm:=1<pq} and \ref{thm:nobif}. In Section \ref{sec:ener}, we develop the energy method for the energy functional associated with \eqref{p}. In Section \ref{sec:sub}, we use the sub- and supersolution method to prove existence and positivity results for positive solutions of \eqref{p}. We give proofs for Theorems \ref{thm:leq1} and \ref{thm:=1<pq} in Section \ref{sec:sub}. In Section \ref{sec:nobif}, we prove Theorem \ref{thm:nobif}. Section \ref{sec:trista} is devoted to a stability analysis of the trivial solution $u=0$, which is based on Lemma \ref{prop:sub} and Theorem \ref{thm:trista}.

\section{Energy method} 

\label{sec:ener}

Let 
\[
E(u)=\int_\Omega (|\nabla u|^2 - u^2), \quad u \in H^1(\Omega);  
\]
then, the next lemma is used several times in the following arguments.  

\begin{lem} \label{lem1719}
Let $\{ u_n \}\subset H^1(\Omega)$ satisfy $E(u_n)\leq0$, $u_n\rightharpoonup u_\infty$, and $u_n \rightarrow u_\infty$ in $L^2(\Omega)$. 
Then, $u_\infty\neq 0$ if $\| u_n\|\geq C$ for some $C>0$. 
\end{lem}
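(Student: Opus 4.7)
The plan is to argue by contradiction, exploiting the fact that the $H^1$ norm splits as $\|u\|^2 = \int_\Omega |\nabla u|^2 + \int_\Omega u^2$ and that the hypothesis $E(u_n)\le 0$ directly controls the gradient part by the $L^2$ part.

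Suppose for contradiction that $u_\infty = 0$. Then $u_n \to 0$ in $L^2(\Omega)$, so $\int_\Omega u_n^2 \to 0$. Since $E(u_n) = \int_\Omega |\nabla u_n|^2 - \int_\Omega u_n^2 \le 0$, this yields $\int_\Omega |\nabla u_n|^2 \le \int_\Omega u_n^2 \to 0$, and therefore
\[
\|u_n\|^2 = \int_\Omega |\nabla u_n|^2 + \int_\Omega u_n^2 \longrightarrow 0,
\]
contradicting the assumption that $\|u_n\|\ge C>0$. Hence $u_\infty \ne 0$.

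There is really no obstacle here — the entire content of the lemma is that on the set $\{E\le 0\}$ the full $H^1$ norm is squeezed between two multiples of the $L^2$ norm, so $L^2$-smallness of the weak limit forces $H^1$-smallness of the sequence itself. My plan in the write-up is just to state the contrapositive explicitly, invoke the strong $L^2$ convergence, and chain the two inequalities in a single display as above.
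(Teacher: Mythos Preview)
Your proof is correct and follows essentially the same contrapositive route as the paper: assume $u_\infty=0$, use strong $L^2$ convergence to kill $\int_\Omega u_n^2$, and then $E(u_n)\le 0$ forces $\|u_n\|\to 0$. The paper phrases the first step via weak lower semicontinuity of $E$ (getting $0=E(u_\infty)\le\varliminf E(u_n)\le\varlimsup E(u_n)\le 0$), but your direct inequality $\int_\Omega|\nabla u_n|^2\le\int_\Omega u_n^2$ is equivalent and slightly cleaner; in particular, your argument never actually uses the weak convergence hypothesis.
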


\begin{proof} 
By the weak lower semicontinuity, $E(u_\infty)\leq \varliminf_n E(u_n)\leq \varlimsup_n E(u_n) \leq0$. 
If $u_\infty=0$, then $\| u_n\|\rightarrow 0$, as desired. 
\end{proof}

We start by proving the following two propositions, which provide the asymptotic profile of a positive solution of \eqref{p} as $\lambda\to \infty$. 
It is understood that $u_{\mathcal{D}}=0$ if $\beta_\Omega=1$. 

\begin{prop} \label{prop:asympt}
Assume that $\beta_\Omega \leq1$. Let $(\lambda_n,u_n)$ be a positive solution of \eqref{p} with $\lambda_n \to\infty$. Then, $u_n \rightarrow u_{\mathcal{D}}$ in $H^1(\Omega)$. 
\end{prop}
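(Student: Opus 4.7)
The plan is to extract a weakly convergent subsequence of $\{u_n\}$ in $H^1(\Omega)$, identify its weak limit $u_\infty$ as a nonnegative weak solution of the Dirichlet logistic problem \eqref{Dp}, upgrade to strong $H^1$ convergence, and then exclude $u_\infty=0$ when $\beta_\Omega<1$.

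By Theorem \ref{thm0}(I), $u_n<1$ in $\overline{\Omega}$. Testing \eqref{def} with $\varphi=u_n$ yields
\begin{align*}
\int_\Omega |\nabla u_n|^2 + \lambda_n \int_{\partial\Omega} u_n^{q+1} = \int_\Omega \bigl( u_n^2 - u_n^{p+1} \bigr),
\end{align*}
whence $E(u_n)\leq 0$, a uniform bound on $\|u_n\|$, and uniform boundedness of $\lambda_n \int_{\partial\Omega} u_n^{q+1}$. Since $\lambda_n\to\infty$, the latter forces $\int_{\partial\Omega}u_n^{q+1}\to 0$. Passing to a subsequence, $u_n\rightharpoonup u_\infty$ in $H^1(\Omega)$ with compact convergence in $L^2(\Omega)$, $L^{p+1}(\Omega)$ (by \eqref{subcp}), and $L^{q+1}(\partial\Omega)$, so $u_\infty\in H^1_0(\Omega)$ and $u_\infty\geq 0$. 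Testing \eqref{def} with an arbitrary $\varphi\in H^1_0(\Omega)$ kills the boundary integral, and passing to the limit identifies $u_\infty$ as a nonnegative weak solution of \eqref{Dp}, hence $u_\infty\in\{0,u_\mathcal{D}\}$. Plugging $\varphi=u_\infty$ into the limit equation gives $\int_\Omega |\nabla u_\infty|^2 = \int_\Omega(u_\infty^2-u_\infty^{p+1})$; combined with the energy identity for $u_n$, the nonnegativity of $\lambda_n\int_{\partial\Omega}u_n^{q+1}$, and weak lower semicontinuity of the gradient norm, this sandwiches $\int_\Omega|\nabla u_n|^2$ between two quantities both tending to $\int_\Omega|\nabla u_\infty|^2$, forcing both $\int_\Omega|\nabla u_n|^2\to \int_\Omega|\nabla u_\infty|^2$ and $\lambda_n\int_{\partial\Omega}u_n^{q+1}\to 0$; together with strong $L^2$ convergence this delivers $u_n\to u_\infty$ in $H^1(\Omega)$.

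It remains to identify $u_\infty$. When $\beta_\Omega=1$, \eqref{Dp} has no positive solution, so $u_\infty=0=u_\mathcal{D}$. When $\beta_\Omega<1$, the crux, and the main obstacle, is to exclude $u_\infty=0$. To this end I would test \eqref{def} with $\phi_\Omega$ (killing the boundary integral in \eqref{def}) and integrate by parts using $-\Delta\phi_\Omega=\beta_\Omega\phi_\Omega$, producing the identity
\begin{align*}
(1-\beta_\Omega)\int_\Omega u_n\phi_\Omega + \int_{\partial\Omega} u_n\Bigl(-\frac{\partial \phi_\Omega}{\partial\nu}\Bigr) = \int_\Omega u_n^p\phi_\Omega.
\end{align*}
Dropping the nonnegative boundary term by \eqref{bdry1513} and dividing by $\int_\Omega u_n\phi_\Omega>0$ yields the pointwise lower bound $\max_{\overline{\Omega}}u_n\geq(1-\beta_\Omega)^{1/(p-1)}$. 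Combined with the uniform $C^\theta(\overline{\Omega})$ bound on $\{u_n\}$ coming from \cite{Ro2005}, this provides a uniform lower bound $\|u_n\|\geq C>0$, whereupon Lemma \ref{lem1719} rules out $u_\infty=0$. Hence $u_\infty=u_\mathcal{D}$, and a standard subsequence-of-subsequences argument upgrades this to full convergence.
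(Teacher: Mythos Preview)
Most of your argument---boundedness, extraction of a weakly convergent subsequence, identification of $u_\infty$ as a nonnegative solution of \eqref{Dp}, the sandwich for strong $H^1$ convergence, and the $\beta_\Omega=1$ case---matches the paper's proof. The divergence is in how you exclude $u_\infty=0$ when $\beta_\Omega<1$: you test with $\phi_\Omega$ to obtain the elegant pointwise bound $\max_{\overline{\Omega}}u_n\geq(1-\beta_\Omega)^{1/(p-1)}$, and then invoke a \emph{uniform} $C^\theta(\overline{\Omega})$ estimate to convert this into an $L^2$ lower bound.

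That last step has a genuine gap. The regularity statement from \cite{Ro2005} cited in the paper is qualitative (each solution lies in $W^{1,r}$, hence $C^\theta$); it does not assert a bound uniform in $\lambda$. Up to the boundary, such a bound would require control of the Neumann data $-\lambda_n u_n^q$ in some $L^r(\partial\Omega)$, and one checks from $\lambda_n\int_{\partial\Omega}u_n^{q+1}\leq|\Omega|$ only that $\|\lambda_n u_n^q\|_{L^r(\partial\Omega)}$ may grow like a positive power of $\lambda_n$ for every $r\geq 1$. Without equicontinuity up to $\partial\Omega$, your pointwise maximum bound does not by itself prevent $u_n\to 0$ in $L^2(\Omega)$: the mass could concentrate in shrinking boundary layers. (One can show each maximum is attained in the interior, but the maximisers may drift to $\partial\Omega$, where interior estimates lose uniformity.)

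The paper avoids this issue by a normalization argument: assuming $u_\infty=0$, one sets $w_n=u_n/\|u_n\|$, shows via the energy inequality and the boundary estimate $\int_{\partial\Omega}w_n^{q+1}\leq \lambda_n^{-1}\|u_n\|^{1-q}\int_\Omega w_n^2\to 0$ that the weak limit $w_\infty$ lies in $H^1_0(\Omega)\setminus\{0\}$ (the latter by Lemma~\ref{lem1719}), and then passes to the limit in the equation tested against $\varphi\in H^1_0(\Omega)$ to deduce that $w_\infty$ is a nontrivial nonnegative solution of $-\Delta w=w$ with Dirichlet data, forcing $\beta_\Omega=1$---a contradiction. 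Your identity with $\phi_\Omega$, if applied to $w_n$ rather than $u_n$, would in fact lead to the same conclusion, so your idea can be repaired; as written, however, the uniform-$C^\theta$ claim is unjustified.
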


\begin{proof}
We first assume that $\beta_\Omega < 1$. 
Because $u_n<1$ in $\overline{\Omega}$, 
we substitute $u=\varphi = u_n$ into \eqref{def} to deduce that 
\begin{align} 
\int_{\Omega}|\nabla u_n|^2 
& = \int_{\Omega} \biggl( u_n^2-u_n^{p+1} \biggr)-\lambda_n \int_{\partial\Omega}u_n^{q+1} \label{ener0756} \\ 
&\leq \int_{\Omega} u_n^2\leq |\Omega|;    \nonumber 
\end{align}
thus, $\| u_n\|$ is bounded. Immediately, up to a subsequence, $u_n \rightharpoonup u_\infty\geq0$, $u_n \rightarrow u_\infty$ in $L^2(\Omega)$ and $L^2(\partial\Omega)$, and $u_n \rightarrow u_\infty$ 
a.e.\ in $\Omega$ for some $u_\infty \in H^1(\Omega)$. We then infer that 
\begin{align} \label{unq+1:2} 
\int_{\partial\Omega}u_n^{q+1} = 
\frac{1}{\lambda_n}\left\{ -\int_{\Omega}|\nabla u_n|^2 +  \int_{\Omega} \left( u_n^2-u_n^{p+1} \right) \right\}
\leq \frac{1}{\lambda_n}\int_{\Omega}u_n^2 \longrightarrow 0, 
\end{align}
which implies that $\int_{\partial\Omega}u_\infty^{q+1}=0$; thus, $u_\infty\in H^1_0(\Omega)$. From \eqref{def} 
with $(\lambda,u)=(\lambda_n, u_n)$, it follows that 
\begin{align*}
\int_{\Omega} \biggl( \nabla u_n \nabla \varphi - u_n \varphi + u_n^p \varphi \biggr) = 0, \quad \varphi \in H^1_0(\Omega).     
\end{align*}
Taking the limit, $u_\infty$ is a nonnegative solution of \eqref{Dp}, where we have used the Lebesgue dominated convergence theorem to deduce that $\int_{\Omega} u_n^p \varphi \rightarrow \int_{\Omega} u_\infty^p \varphi$. 

Then, we verify that $u_\infty \neq 0$. Since $E(u_n)\leq0$, the weak lower semicontinuity means that  
\begin{align*}
E(u_\infty)\leq \varliminf_{n\to \infty}E(u_n)\leq \varlimsup_{n\to \infty} E(u_n)\leq0.    
\end{align*}
If $u_\infty=0$, then it follows that $\| u_n \|\rightarrow 0$. 
Here, we may assume that $u_n \rightarrow 0$ a.e.\ in $\Omega$.  
Say that $w_n=\frac{u_n}{\| u_n \|}$; then, up to a subsequence, 
$w_n \rightharpoonup w_\infty\geq0$, $w_n \rightarrow w_\infty$ in $L^2(\Omega)$ 
and $L^2(\partial\Omega)$, and $w_n \rightarrow w_\infty$ a.e.\ in $\Omega$. Since $\| w_n \|=1$, we deduce that $w_\infty \neq 0$ using Lemma \ref{lem1719}. 
However, we observe from \eqref{unq+1:2} that 
\begin{align*}
\int_{\partial\Omega} w_n^{q+1} \leq \frac{1}{\lambda_n}\int_{\Omega} w_n^2 \| u_n  \|^{1-q}\longrightarrow 0. 
\end{align*}
This implies that $w_\infty \in H^1_0(\Omega)$. 
From \eqref{def} with $(\lambda,u)=(\lambda_n, u_n)$, we see that 
\begin{align*}
\int_\Omega \left( \nabla w_n \nabla \varphi - w_n \varphi + w_n \varphi \, u_n^{p-1} \right) = 0, \quad \varphi \in H^1_0(\Omega). 
\end{align*}
Taking the limit, $\int_\Omega (\nabla w_\infty \varphi - w_\infty \varphi) = 0$, where we have used the Lebesgue dominated convergence theorem to obtain that 
\begin{align*}
\int_\Omega \left\vert w_n \varphi \, u_n^{p-1} \right\vert \leq 
\left( \int_\Omega w_n^2 \right)^{\frac{1}{2}} \left( \int_\Omega \varphi^2 u_n^{2(p-1)} \right)^{\frac{1}{2}} \leq C \left( \int_\Omega \varphi^2 u_n^{2(p-1)} \right)^{\frac{1}{2}} 
\longrightarrow 0. 
\end{align*}
This implies that $w_\infty$ is a nontrivial nonnegative solution of the problem
\begin{align*}
\begin{cases}
-\Delta w = w & \mbox{ in } \Omega, \\
w=0 & \mbox{ on } \partial \Omega. 
\end{cases}    
\end{align*}
Thus, we deduce that $\beta_\Omega = 1$, which contradicts the assumption. 
The assertion that $u_\infty\geq0$ and $u_\infty\neq 0$ means that 
$u_\infty$ is the unique positive solution $u_{\mathcal{D}}$ of \eqref{Dp} by the strong maximum principle. 

It remains to show that $u_n \rightarrow u_\infty$ in $H^1(\Omega)$. 
Observing that $E(u_\infty)+\int_\Omega u_\infty^{p+1}=0$ and 
$E(u_n) \leq - \int_\Omega u_n^{p+1}$, we deduce that 
\begin{align*}
E(u_\infty)\leq \varliminf_{n\to \infty}E(u_n)\leq \varlimsup_{n\to \infty}E(u_n) \leq - \varlimsup_{n\to \infty}\int_\Omega u_n^{p+1} = - \int_\Omega u_\infty^{p+1} = E(u_\infty),  
\end{align*}
where we have used the Lebesgue dominated convergence theorem again. 
Thus, $E(u_n)\rightarrow E(u_\infty)$, i.e., $\| u_n \| \rightarrow \| u_\infty\|$. Since $u_n \rightharpoonup u_\infty$, the desired assertion follows. 

Next, we assume that $\beta_\Omega = 1$. Then, 
$u_\infty$ is a nonnegative solution of \eqref{Dp}, and indeed $u_\infty=0$ because $\beta_\Omega = 1$ (\cite{BO86}). Thus, $\| u_n \| \rightarrow 0$, as desired. 
\end{proof}

In the case where $\beta_\Omega=1$, we observe that $E(u)\geq 0$ for $u\in H^1_0(\Omega)$. Indeed, we note that 
\begin{align*}
\left\{ u\in H^{1}_{0}(\Omega) : E(u)=0 \right\} = \langle \phi_\Omega \rangle := 
\{ s \phi_\Omega : s \in \mathbb{R} \}. 
\end{align*}
We then investigate the asymptotic profile of a positive solution $(\lambda_n,u_n)$ of \eqref{p} with $\| u_n \|\rightarrow 0$. 

\begin{prop} \label{prop:unprofi}
Assume that $\beta_\Omega=1$. Let $(\lambda_n,u_n)$ be a positive solution of \eqref{p} such that $\lambda_n \geq \underline{\lambda}$ for some $\underline{\lambda}>0$ and $\| u_n \| \rightarrow 0$.  
Then, we obtain that 
$\frac{u_n}{\| u_n \|} \rightarrow \phi_\Omega$ in $H^1(\Omega)$. 
\end{prop}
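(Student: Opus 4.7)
The plan is to follow the rescaling strategy already used inside the proof of Proposition \ref{prop:asympt}, applied this time to the sequence $w_n = u_n/\| u_n\|$. Since $\| w_n\| = 1$, I extract a subsequence with $w_n \rightharpoonup w_\infty$ in $H^1(\Omega)$, $w_n \rightarrow w_\infty$ in $L^2(\Omega)$ and $L^2(\partial \Omega)$, and $w_n \rightarrow w_\infty$ a.e.\ in $\Omega$. Testing \eqref{def} at $(\lambda_n,u_n)$ with $\varphi=u_n$ yields the energy identity
\[
E(u_n) + \int_\Omega u_n^{p+1} + \lambda_n \int_{\partial \Omega} u_n^{q+1} = 0,
\]
which delivers both $E(w_n) \leq 0$ and the boundary bound $\lambda_n \int_{\partial \Omega} u_n^{q+1} \leq \int_\Omega u_n^2 \leq \| u_n\|^2$. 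Dividing the boundary bound by $\| u_n\|^{q+1}$ and using $\lambda_n \geq \underline{\lambda}$ together with $q<1$ gives
\[
\int_{\partial \Omega} w_n^{q+1} \leq \frac{\| u_n\|^{1-q}}{\underline{\lambda}} \longrightarrow 0,
\]
so $w_\infty = 0$ a.e.\ on $\partial\Omega$ and hence $w_\infty \in H^1_0(\Omega)$.

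Next I identify the limit. For any $\varphi \in H^1_0(\Omega)$ the boundary term in \eqref{def} vanishes, and dividing by $\| u_n\|$ gives
\[
\int_\Omega \bigl( \nabla w_n \nabla \varphi - w_n \varphi + w_n \varphi\, u_n^{p-1} \bigr) = 0.
\]
Because $u_n<1$ in $\overline{\Omega}$ and $u_n \to 0$ a.e., a Cauchy--Schwarz/dominated convergence argument identical to the one in the proof of Proposition \ref{prop:asympt} shows $\int_\Omega w_n\varphi\, u_n^{p-1} \to 0$, so in the limit $w_\infty$ weakly satisfies $-\Delta w_\infty = w_\infty$ in $\Omega$ with $w_\infty = 0$ on $\partial \Omega$. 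Nontriviality of $w_\infty$ follows from $E(w_n) \leq 0$, $\| w_n\| = 1$, and Lemma \ref{lem1719}. Since $w_\infty \geq 0$ is nontrivial and $\beta_\Omega = 1$ is simple, $w_\infty$ is a positive scalar multiple of $\phi_\Omega$.

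To upgrade the convergence from weak to strong in $H^1(\Omega)$, I combine $E(w_n) \leq 0$ with $w_n \rightarrow w_\infty$ in $L^2(\Omega)$:
\[
\limsup_{n\to\infty} \int_\Omega |\nabla w_n|^2 \leq \limsup_{n\to\infty} \int_\Omega w_n^2 = \int_\Omega w_\infty^2 = \int_\Omega |\nabla w_\infty|^2,
\]
where the final equality uses that $w_\infty$ is an eigenfunction for the eigenvalue $\beta_\Omega = 1$. Weak lower semicontinuity then forces $\int_\Omega |\nabla w_n|^2 \to \int_\Omega |\nabla w_\infty|^2$, so $\| w_n\| \to \| w_\infty\|$ and therefore $w_n \to w_\infty$ strongly in $H^1(\Omega)$. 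The normalization $\| w_\infty\| = 1$ pins down the scalar multiple, so $w_\infty = \phi_\Omega$ with $\phi_\Omega$ taken $H^1$-normalized, and uniqueness of this limit ensures that the entire sequence (not just the extracted subsequence) converges. The only delicate point is making the boundary contribution vanish after rescaling: dividing by $\| u_n\|^{q+1}$ creates a blow-up factor $\| u_n\|^{-(q+1)}$, and the assumption $\lambda_n \geq \underline{\lambda} > 0$ is exactly what preserves a net decay $\| u_n\|^{1-q} \to 0$.
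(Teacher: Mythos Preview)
Your proof is correct and follows essentially the same approach as the paper: rescale to $w_n=u_n/\|u_n\|$, use the energy identity to get $E(w_n)\leq 0$ and to force the boundary trace of the weak limit to vanish, then upgrade weak to strong $H^1$ convergence via norm convergence. The only difference is that you identify $w_\infty$ as a Dirichlet eigenfunction by passing to the limit in the equation tested against $\varphi\in H^1_0(\Omega)$ and invoking Lemma~\ref{lem1719}, whereas the paper skips this step and instead uses directly that $\beta_\Omega=1$ gives $E(w_\infty)\geq 0$ for $w_\infty\in H^1_0(\Omega)$, so the sandwich $0\leq E(w_\infty)\leq\liminf E(w_n)\leq\limsup E(w_n)\leq 0$ simultaneously yields $E(w_n)\to E(w_\infty)=0$ (hence strong convergence) and $w_\infty\in\langle\phi_\Omega\rangle$ via the characterization $\{u\in H^1_0:E(u)=0\}=\langle\phi_\Omega\rangle$; your PDE-testing step is a harmless detour.
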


\begin{proof}
Say that $w_n = \frac{u_n}{\| u_n\|}$ and, up to a subsequence, $w_n \rightharpoonup w_\infty\geq0$, and $w_n \rightarrow w_\infty$ in $L^2(\Omega)$ and $L^2(\partial\Omega)$ for some $w_\infty \in H^1(\Omega)$. From \eqref{ener0756} it follows that 
\begin{align*} 
\underline{\lambda}\int_{\partial\Omega} u_n^{q+1} \leq 
\int_\Omega u_n^2. 
\end{align*}
We use the condition $\| u_n\| \rightarrow 0$ to infer that 
\begin{align*}
\int_{\partial\Omega}w_n^{q+1}\leq \frac{\| u_n\|^{1-q}}{\underline{\lambda}}\int_\Omega w_n^2 \longrightarrow 0; 
\end{align*}
thus, $\int_{\partial\Omega}w_\infty^{q+1}=0$, i.e., $w_\infty \in H^1_0(\Omega)$.

Since $\beta_\Omega=1$ and $E(w_n)\leq0$, the weak lower semicontinuity means that 
\begin{align*}
0\leq E(w_\infty)\leq \varliminf_{n\to \infty} E(w_n) \leq \varlimsup_{n\to \infty} E(w_n)\leq0, 
\end{align*}
implying that $E(w_n)\rightarrow E(w_\infty)=0$, i.e., 
$\| w_n\| \rightarrow \| w_\infty\|$. 
Since $w_n \rightharpoonup w_\infty$, we deduce that $w_n \rightarrow w_\infty$ in $H^1(\Omega)$ and $w_\infty = \phi_\Omega$ with $\| \phi_\Omega\|=1$. 
Finally, because $\phi_\Omega$ is unique, the desired conclusion follows. 
\end{proof}

\begin{rem} \label{rem:behave}
If we construct a positive solution $(\lambda_n, u_n)$ of \eqref{p} 
such that $u_{n}>0$ in $\overline{\Omega}$ without using \eqref{subcp}, 
then Propositions \ref{prop:asympt} and \ref{prop:unprofi} remain valid for all $p>1$. 
\end{rem}

For further analysis of the asymptotic behavior of a positive solution $(\lambda_n,u_n)$ of \eqref{p} 
with the condition that $\lambda_n\geq \underline{\lambda}$ and $\| u_n \| \rightarrow 0$, the orthogonal decomposition $H^1(\Omega) = \langle \phi_\Omega \rangle \oplus V$ using $\langle \phi_\Omega \rangle$ is useful, 
where $V$ denotes the orthogonal complement of $\langle \phi_\Omega \rangle$ that is given explicitly as 
\begin{align*}
V = \left\{ v \in H^1(\Omega) : \int_\Omega \biggl(\nabla v \nabla \phi_\Omega + v \phi_\Omega \biggr) = 0 \right\}. 
\end{align*}
Note that $\langle \phi_\Omega \rangle$ and $V$ are both closed subspaces of $H^1(\Omega)$ and $\|u\|$ is equivalent to $|s|+\|v\|$ for $u=s\phi_\Omega + v \in H^1(\Omega)=\langle \phi_\Omega\rangle\oplus V$. 

Using the orthogonal decomposition, 
\begin{align} \label{vn1557}
u_n = s_n \phi_\Omega + v_n \in \langle \phi_\Omega \rangle \oplus V 
\end{align}
for a positive solution $(\lambda_n,u_n)$ of \eqref{p} such that $\lambda_n\geq \underline{\lambda}$ for some $\underline{\lambda}>0$ and $\| u_n \|\rightarrow 0$ (under the assumption of Proposition \ref{prop:unprofi}). Since $\frac{u_n}{\| u_n\|} \rightarrow \phi_\Omega$ in $H^1(\Omega)$, it follows that 
\begin{align}
&\frac{s_n}{\|u_n\|}\rightarrow 1, \label{sn1145}\\
&\frac{\| v_n\|}{\| u_n\|}\rightarrow 0, \label{vn1145}\\
&\frac{\| v_n\|}{s_n} \rightarrow 0. \label{vnsn1145}
\end{align}
Because of \eqref{sn1145}, we may assume that $s_n>0$. Note that $v_n\geq 0$ on $\partial\Omega$ because $\phi_\Omega=0$ on $\partial\Omega$. 
We then deduce the following result, which plays a crucial role in the proof of Theorem \ref{thm:nobif}.

\begin{lem}  \label{prop:E} 
Assume that $\beta_\Omega=1$. Let $\{ v_n\}$ be as introduced by \eqref{vn1557}. Then, there exists $c>0$ such that 
\begin{align} \label{Evn1911}
E(v_n) + c \int_{\partial\Omega} v_n^{q+1}\leq 0 \quad\mbox{ for sufficiently large $n$,  }
\end{align}
provided that one of the following conditions is satisfied. 
\begin{enumerate} \setlength{\itemsep}{0.2cm} 
    \item[(a)] $pq<1$,
    \item[(b)] $pq=1$ and $\lambda_n \to \infty$,
    \item[(c)] $pq>1$ and $\lambda_n$ is bounded above. 
\end{enumerate}
\end{lem}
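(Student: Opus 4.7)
The plan is to derive a master identity for $E(v_n)$ and match its right-hand side against two complementary lower bounds for $A_n := \int_{\partial\Omega} v_n^{q+1}$, then conclude case by case. First, testing \eqref{def} against $\varphi = u_n$ yields $E(u_n) + B_n + \lambda_n A_n = 0$ with $B_n := \int_\Omega u_n^{p+1}$ (the boundary integral equals $A_n$ because $u_n = v_n$ on $\partial\Omega$). Testing against $\varphi = \phi_\Omega$ and integrating by parts using $-\Delta\phi_\Omega = \phi_\Omega$ with $\phi_\Omega = 0$ on $\partial\Omega$ produces $\int_\Omega u_n^p\phi_\Omega = -\int_{\partial\Omega} v_n\,\partial_\nu\phi_\Omega$. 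Using $u_n = s_n\phi_\Omega + v_n$, the $H^1$-orthogonality defining $V$, and $E(\phi_\Omega) = 0$ (equivalent to $\beta_\Omega = 1$), I would obtain $E(u_n) = E(v_n) - 2s_n\int_\Omega u_n^p\phi_\Omega$. Eliminating $E(u_n)$ then gives the master identity
\begin{equation*}
E(v_n) + \lambda_n A_n = 2s_n\int_\Omega u_n^p\phi_\Omega - B_n.
\end{equation*}

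Next I would extract the asymptotic size of the right-hand side. Setting $w_n := u_n/s_n$, conditions \eqref{sn1145} and \eqref{vnsn1145} give $w_n \to \phi_\Omega$ in $H^1(\Omega)$, hence in $L^{p+1}(\Omega)$ by the subcritical assumption \eqref{subcp}. Continuity of $u \mapsto u^p$ from $L^{p+1}$ into $L^{(p+1)/p}$ then yields $s_n^{-p}\int_\Omega u_n^p\phi_\Omega \to C_0$ and $s_n^{-(p+1)}B_n \to C_0$, where $C_0 := \int_\Omega\phi_\Omega^{p+1} > 0$. Consequently
\begin{equation*}
2s_n\int_\Omega u_n^p\phi_\Omega - B_n \geq \tfrac{1}{2}C_0\,s_n^{p+1} \quad \text{for all large } n.
\end{equation*}

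I would then derive two lower bounds for $A_n$. From the boundary identity above together with \eqref{bdry1513}, $\int_{\partial\Omega} v_n \geq c_2^{-1}\int_\Omega u_n^p\phi_\Omega \geq \frac{C_0}{2c_2}s_n^p$, so Hölder's inequality on $\partial\Omega$ gives
\begin{equation*}
A_n \geq C_1\,s_n^{p(q+1)}. \qquad (\mathrm{I})
\end{equation*}
Alternatively, integrating the pointwise equation $-\Delta u_n = u_n - u_n^p$ over $\Omega$ and substituting the boundary condition $\partial_\nu u_n = -\lambda_n u_n^q$ produces $\lambda_n\int_{\partial\Omega} u_n^q = \int_\Omega u_n - \int_\Omega u_n^p$. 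Since $\int_\Omega u_n = s_n\int_\Omega\phi_\Omega + o(s_n)$ dominates $\int_\Omega u_n^p = O(s_n^p)$ as $s_n \to 0$ (recall $p > 1$), a further application of Hölder yields
\begin{equation*}
A_n \geq C_2\,(s_n/\lambda_n)^{(q+1)/q}. \qquad (\mathrm{II})
\end{equation*}

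Finally, taking $c := \underline{\lambda}/2$, so that $\lambda_n - c \geq \underline{\lambda}/2$, I would verify in each case that $(\lambda_n - c)A_n \geq \tfrac{1}{2}C_0\,s_n^{p+1}$ for large $n$; the master identity then gives $E(v_n) + cA_n \leq 0$. In case (a), bound $(\mathrm{I})$ suffices because $p(q+1) < p+1$ makes $s_n^{p(q+1)-(p+1)} = s_n^{-(1-pq)} \to \infty$. In case (b), both $(\mathrm{I})$ and $(\mathrm{II})$ give $A_n \geq C s_n^{p+1}$ and the required blow-up factor is supplied by $\lambda_n \to \infty$. In case (c), bound $(\mathrm{II})$ works because $(q+1)/q < p+1$ makes $s_n^{(q+1)/q-(p+1)} = s_n^{-(pq-1)/q} \to \infty$, while the boundedness of $\lambda_n$ keeps $\lambda_n^{-(q+1)/q}$ away from zero. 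The main obstacle is precisely this calibration of the two lower bounds: the exponents $p(q+1)$ and $(q+1)/q$ meet exactly at $pq = 1$ and straddle $p+1$ on opposite sides depending on the sign of $pq - 1$, which is exactly why the trichotomy (a)--(c) appears in the hypothesis.
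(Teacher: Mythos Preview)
Your approach is essentially the paper's: both expand the energy identity via $u_n = s_n\phi_\Omega + v_n$, use the test functions $\phi_\Omega$ and $1$ together with H\"older's inequality to obtain the two lower bounds (I) and (II) on $A_n$, and then compare against a remainder of order $s_n^{p+1}$ case by case. The paper drops $B_n$ immediately (it is nonnegative) and phrases the task as showing $I_n := \frac{\lambda_n}{2}A_n - 2s_n\int_{\partial\Omega}(-\partial_\nu\phi_\Omega)v_n \geq 0$; your master identity is just a slight repackaging of the same computation, keeping $B_n$ on the right.

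There is one slip in the logic that you should fix. You record the \emph{lower} bound $2s_n\int_\Omega u_n^p\phi_\Omega - B_n \geq \tfrac12 C_0\, s_n^{p+1}$ and then set out to verify $(\lambda_n - c)A_n \geq \tfrac12 C_0\, s_n^{p+1}$, but these two inequalities together do \emph{not} give $(\lambda_n - c)A_n \geq \text{RHS}$, which is what the master identity requires for $E(v_n)+cA_n\le 0$. What you need is the \emph{upper} bound $\text{RHS}\leq 2C_0\, s_n^{p+1}$, which follows just as easily from your asymptotic $\text{RHS}/s_n^{p+1}\to C_0$. This is harmless because your case analysis actually establishes $(\lambda_n - c)A_n / s_n^{p+1}\to\infty$ in each of (a)--(c), so any constant multiple of $s_n^{p+1}$ is eventually dominated.
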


\begin{proof}
Substituting $u_n=s_n\phi_\Omega + v_n$ into \eqref{ener0756}, we deduce that 
\begin{align} \label{eq1211}
2s_n \left(\int_\Omega \nabla \phi_\Omega \nabla v_n - \phi_\Omega v_n \right) + E(v_n) + 
\int_\Omega (s_n\phi_\Omega + v_n)^{p+1} + \lambda_n \int_{\partial\Omega} v_n^{q+1} = 0. 
\end{align}
Using the divergence theorem, 
\begin{align} \label{vn1524}
\int_\Omega \phi_\Omega v_n = \int_\Omega -\Delta \phi_\Omega v_n = \int_\Omega \nabla \phi_\Omega \nabla v_n + \int_{\partial\Omega} \biggl( -\frac{\partial \phi_\Omega}{\partial \nu} \biggr)  v_n;  
\end{align}
thus, \eqref{eq1211} implies that
\begin{align*}
-2s_n \int_{\partial\Omega} \biggl( -\frac{\partial \phi_\Omega}{\partial \nu} \biggr) 
v_n + E(v_n) + \int_\Omega (s_n\phi_\Omega + v_n)^{p+1} + \lambda_n \int_{\partial\Omega} v_n^{q+1} = 0.      
\end{align*}
It follows that 
\begin{align} \label{En1034}
E(v_n) + \frac{\lambda_n}{2} \int_{\partial\Omega} v_n^{q+1} + I_n \leq 0 
\end{align}
with 
\begin{align} \label{In1034}
I_n = \frac{\lambda_n}{2}\int_{\partial\Omega} v_n^{q+1} - 2s_n \int_{\partial\Omega} \left( -\frac{\partial \phi_\Omega}{\partial \nu} \right)v_n. 
\end{align}
Once we verify that 
\begin{align} \label{In1338}
I_n \geq 0 \quad \mbox{ for sufficiently large $n$, }
\end{align}
we obtain \eqref{Evn1911} and complete the proof. 
To verify \eqref{In1338}, we use the test function $\varphi = \phi_\Omega$ to deduce that 
\begin{align} \label{unphOm1502}
\int_\Omega \biggl(\nabla u_n \nabla \phi_\Omega - u_n \phi_\Omega + u_n^{p} \phi_\Omega \biggr) = 0.    
\end{align}
Substituting $u_n = s_n \phi_\Omega + v_n$ into  
\eqref{unphOm1502} and combining \eqref{vn1524} with \eqref{unphOm1502} provide 
\begin{align} \label{vnsn1033}
\int_{\partial\Omega} \left(-\frac{\partial \phi_\Omega}{\partial \nu} \right) \frac{v_n}{s_n^p} = \int_\Omega \left( 
\phi_\Omega + \frac{v_n}{s_n} \right)^{p} \phi_\Omega. 
\end{align}

We then consider either case (a) or (b). From \eqref{vnsn1145}, we deduce that  
\begin{align*}
\int_\Omega \left( 
\phi_\Omega + \frac{v_n}{s_n} \right)^{p} \phi_\Omega 
\longrightarrow \int_\Omega \phi_\Omega^{p+1} > 0. 
\end{align*}
Taking into account \eqref{bdry1513}, we may derive from \eqref{vnsn1033} that 
\begin{align*}
c s_n^p \leq \int_{\partial\Omega} v_n
\end{align*}
for some $c>0$. By H\"older's inequality, it follows that 
\begin{align} \label{c41554}
c s_n^{p} \leq \int_{\partial\Omega}v_n \leq |\partial \Omega|^{\frac{q}{q+1}}\left( \int_{\partial\Omega}v_n^{q+1} \right)^{\frac{1}{q+1}}. 
\end{align}
Combining \eqref{In1034} with \eqref{c41554} and using H\"older's inequality, 
there exist $c, \tilde{c}>0$ such that 
\begin{align*}
I_n&\geq \frac{\lambda_n}{2} \int_{\partial\Omega}v_n^{q+1} - c\, s_n \left(\int_{\partial\Omega} v_n^{q+1}\right)^{\frac{1}{q+1}}\\ 
&=\left\{ \frac{\lambda_n}{2}\left( \int_{\partial\Omega} v_n^{q+1} \right)^{\frac{q}{q+1}} - c\, s_n \right\}
\left( \int_{\partial\Omega} v_n^{q+1} \right)^{\frac{1}{q+1}} \\ 
&\geq \left\{ \tilde{c}\, \lambda_n \, s_n^{pq} - c\, s_n \right\}\left( \int_{\partial\Omega} v_n^{q+1} \right)^{\frac{1}{q+1}} \\ 
&= s_n^{pq}\left\{ \tilde{c} \lambda_n - c\, s_n^{1-pq} \right\}\left( \int_{\partial\Omega} v_n^{q+1} \right)^{\frac{1}{q+1}}. 
\end{align*}
Since $s_n \to 0$ from \eqref{sn1145}, assertion \eqref{In1338} follows.

We next consider case (c) and verify \eqref{In1338}. By combining \eqref{In1034} with \eqref{vnsn1033}, it follows from \eqref{In1034} that 
\begin{align} \label{In1057}
I_n = s_n^{p+1} \biggl\{ \frac{\lambda_n}{2}\int_{\partial\Omega} \frac{v_n^{q+1}}{s_n^{p+1}} - 2 \int_\Omega \biggl( \phi_\Omega + \frac{v_n}{s_n}\biggr)^p \phi_\Omega \biggr\}. 
\end{align}
Furthermore, we use the test function $\varphi = 1$ in \eqref{def} to infer that
\begin{align*}
-\int_\Omega u_n + \int_\Omega u_n^p + \lambda_n \int_{\partial\Omega}u_n^q = 0.      
\end{align*}
Substituting $u_n = s_n\phi_\Omega + v_n$, 
\begin{align*}
-\int_\Omega \biggl(\phi_\Omega + \frac{v_n}{s_n} \biggr) + s_n^{p-1}\int_\Omega \biggl( \phi_\Omega + \frac{v_n}{s_n}\biggr)^p + \int_{\partial\Omega} \frac{\lambda_n v_n^q}{s_n}=0,    
\end{align*}
which implies that 
\begin{align*}
\int_{\partial\Omega} \frac{\lambda_n v_n^q}{s_n} \longrightarrow \int_\Omega \phi_\Omega >0, 
\end{align*}
where we have used condition \eqref{vnsn1145}. Thus, we may deduce that 
\begin{align*}
c\frac{s_n}{\lambda_n} \leq \int_{\partial\Omega} v_n^{q} 
\end{align*}
for some $c>0$. Using H\"older's inequality, we deduce that 
\begin{align} \label{vnq+1058}
c \left( \frac{s_n}{\lambda_n} \right)^{\frac{q+1}{q}} \leq \int_{\partial\Omega}v_n^{q+1}
\end{align}
for some $c>0$. Combining \eqref{In1057} with \eqref{vnq+1058}, 
\begin{align*}
I_n\geq s_n^{p+1} \biggl\{ c\,  s_n^{\frac{1}{q}-p}\lambda_n^{-\frac{1}{q}}
-2\int_\Omega \biggl( \phi_\Omega + \frac{v_n}{s_n}\biggr)^p \phi_\Omega \biggr\}
\end{align*}
for some $c>0$. We observe that 
\begin{align*}
& s_n^{\, \frac{1}{q}-p} \rightarrow \infty, \\
& \lambda_n^{-\frac{1}{q}} \geq c \quad \mbox{ by some constant $c>0$, } \\ 
& \int_\Omega \left( \phi_\Omega + \frac{v_n}{s_n}\right)^p \phi_\Omega \rightarrow \int_\Omega \phi_\Omega^{p+1} > 0. 
\end{align*}
Thus, assertion \eqref{In1338} follows. 
\end{proof}

We conclude this section with the establishment of the uniqueness and stability results for a positive solution of \eqref{p} in the case where $\beta_\Omega<1$.  

\begin{prop} \label{prop:unista}
Assume that $\beta_\Omega < 1$. Then, there exists $\underline{\Lambda}>0$ such that 
if $\lambda > \underline{\Lambda}$, then the following {\rm two} assertions hold:
\begin{enumerate} \setlength{\itemsep}{0.2cm} 

\item Problem \eqref{p} has at most one positive solution. 

\item A positive solution $u$ of \eqref{p} satisfying $u>0$ in $\overline{\Omega}$ is asymptotically stable.

\end{enumerate}
\end{prop}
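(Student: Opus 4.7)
I would prove both (i) and (ii) by contradiction along a sequence $\lambda_n\to\infty$, using the compactness supplied by Proposition \ref{prop:asympt}: every positive solution of \eqref{p} with $\lambda$ large is close in $H^1(\Omega)$ to the Dirichlet solution $u_{\mathcal{D}}$. The central auxiliary fact is that the principal eigenvalue $\mu_1$ of the Dirichlet linearization of the logistic equation at $u_{\mathcal{D}}$,
\[
-\Delta\psi-\psi+pu_{\mathcal{D}}^{p-1}\psi=\mu\psi\ \ \mbox{in }\Omega,\quad \psi=0\ \ \mbox{on }\partial\Omega,
\]
is strictly positive. Since $p>1$, this follows by testing the Dirichlet equation against the positive first eigenfunction $\psi_1$, giving $\mu_1=(p-1)\int_\Omega u_{\mathcal{D}}^p\psi_1/\int_\Omega u_{\mathcal{D}}\psi_1>0$. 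The plan is to extract from any purported counterexample a nontrivial weak $H^1$-limit that would be a Dirichlet eigenfunction of this operator with nonpositive eigenvalue, contradicting $\mu_1>0$.

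For (ii), assume $(\lambda_n,u_n)$ exist with $\lambda_n\to\infty$, $u_n>0$ in $\overline{\Omega}$, and $\gamma_n:=\gamma_1(\lambda_n,u_n)\leq0$, and let $\varphi_n>0$ be the corresponding principal eigenfunction normalized by $\int_\Omega\varphi_n^2+\int_{\partial\Omega}\varphi_n^2=1$. Testing the linearized equation against $\varphi_n$ yields
\[
\int_\Omega|\nabla\varphi_n|^2+p\int_\Omega u_n^{p-1}\varphi_n^2+\lambda_n q\int_{\partial\Omega}u_n^{q-1}\varphi_n^2\leq\int_\Omega\varphi_n^2\leq1,
\]
so $\{\varphi_n\}$ is bounded in $H^1(\Omega)$ and, up to a subsequence, $\varphi_n\rightharpoonup\varphi_\infty\geq0$ weakly in $H^1(\Omega)$, strongly in $L^2(\Omega)$ and $L^2(\partial\Omega)$. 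Proposition \ref{prop:asympt} gives $u_n\to0$ a.e.\ on $\partial\Omega$ along a further subsequence, hence $\lambda_n u_n^{q-1}\to\infty$ a.e.\ on $\partial\Omega$; combined with the $L^1$-bound above, Fatou's lemma forces $\varphi_\infty=0$ on $\partial\Omega$, i.e.\ $\varphi_\infty\in H^1_0(\Omega)$. Passing to the limit in the weak form against $\psi\in H^1_0(\Omega)$ (the subcritical condition \eqref{subcp} controlling $\int_\Omega u_n^{p-1}\varphi_n\psi$), $\varphi_\infty$ is an eigenfunction of the Dirichlet linearization at $u_{\mathcal{D}}$ with eigenvalue $\gamma_\infty\leq0$: either $\varphi_\infty\equiv0$, which violates the normalization in the limit, or $\varphi_\infty\geq0$ is nontrivial and by simplicity of the Dirichlet principal eigenvalue $\gamma_\infty=\mu_1>0$, a contradiction in both cases.

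For (i), assume two distinct positive solutions $u_n^1\neq u_n^2$ of \eqref{p} exist for $\lambda_n\to\infty$, set $w_n=u_n^1-u_n^2$ and $\tilde w_n=w_n/\|w_n\|$, and subtract the weak formulations tested against $w_n$. After dividing by $\|w_n\|^2$ this gives
\[
\int_\Omega|\nabla\tilde w_n|^2+\frac{\int_\Omega(u_n^{1,p}-u_n^{2,p})w_n}{\|w_n\|^2}+\frac{\lambda_n\int_{\partial\Omega}(u_n^{1,q}-u_n^{2,q})w_n}{\|w_n\|^2}=\int_\Omega\tilde w_n^2,
\]
in which both quotients are nonnegative by monotonicity of $x\mapsto x^p$ and $x\mapsto x^q$ on $[0,\infty)$. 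Then $\int_\Omega|\nabla\tilde w_n|^2\leq\int_\Omega\tilde w_n^2$, so $\|\tilde w_n\|=1$ yields $\int_\Omega\tilde w_n^2\geq1/2$, and a weak $H^1$-limit $\tilde w_\infty$ satisfies $\int_\Omega\tilde w_\infty^2\geq1/2$, hence $\tilde w_\infty\not\equiv0$. Rewriting the boundary quotient as $\lambda_n q\int_{\partial\Omega}\eta_n^{q-1}\tilde w_n^2\leq1$ via the mean value theorem (with $\eta_n$ between $u_n^1$ and $u_n^2$) and using $\eta_n\to0$ a.e.\ on $\partial\Omega$ from Proposition \ref{prop:asympt}, Fatou again forces $\tilde w_\infty=0$ on $\partial\Omega$. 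Passing to the limit in the bulk, using the analogous expansion $u_n^{1,p}-u_n^{2,p}=p\xi_n^{p-1}w_n$ with $\xi_n\to u_{\mathcal{D}}$, identifies $\tilde w_\infty\in H^1_0(\Omega)\setminus\{0\}$ as a Dirichlet eigenfunction of $-\Delta-1+pu_{\mathcal{D}}^{p-1}$ with eigenvalue $0$, forcing $\mu_1\leq0$ and producing the contradiction.

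The main obstacle will be executing the Fatou-type trace-vanishing step rigorously: by Theorem \ref{thm0}(I) the solutions $u_n,u_n^i$ are only guaranteed to be nonnegative on $\partial\Omega$ and may vanish on nontrivial subsets, so $u_n^{q-1}$ and $\eta_n^{q-1}$ are merely a.e.\ finite. The argument has to be run using only $\lambda_n u_n^{q-1}\to\infty$ a.e.\ together with the $\lambda_n$-weighted $L^1$-bound, and one must define $\eta_n$ a.e.\ on the set $\{u_n^1+u_n^2>0\}$, off which $w_n$ vanishes anyway. A secondary issue is the passage to the limit in $\int_\Omega u_n^{p-1}\varphi_n\psi$ and $\int_\Omega\xi_n^{p-1}\tilde w_n\psi$, which is handled by the Sobolev compactness guaranteed by \eqref{subcp} together with dominated convergence.
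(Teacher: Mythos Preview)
Your proof is correct and follows essentially the same contradiction strategy as the paper: pass to a sequence $\lambda_n\to\infty$, use Proposition~\ref{prop:asympt} for compactness, show the normalized limit lies in $H^1_0(\Omega)$, and contradict the positivity of the principal eigenvalue $\gamma_{1,\mathcal D}$ of the Dirichlet linearization at $u_{\mathcal D}$.

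There are two points where the paper is more economical. First, the trace-vanishing step you flag as the ``main obstacle'' is a non-issue in the paper's argument: by Theorem~\ref{thm0}(I) every positive solution satisfies $u_n<1$ in $\overline{\Omega}$, so $u_n^{q-1}\geq 1$ on $\partial\Omega$ and the difference quotient $\frac{(u_n^1)^q-(u_n^2)^q}{u_n^1-u_n^2}\geq q$ wherever $w_n\neq 0$. This gives $\lambda_n q\int_{\partial\Omega}\varphi_n^2\leq 1$ (and the analogue for $\tilde w_n$) directly, with no need for Fatou, a.e.\ trace convergence, or any care about the zero sets of $u_n^i$ on $\partial\Omega$. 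Second, for the final contradiction the paper does not pass to the limit in the weak equation to identify an eigenfunction; it simply uses weak lower semicontinuity of the quadratic form $\psi\mapsto\int_\Omega(|\nabla\psi|^2-\psi^2+pu_{\mathcal D}^{p-1}\psi^2)$ to obtain a nonpositive value at a nontrivial $\psi_\infty\in H^1_0(\Omega)$, which contradicts \eqref{gam1D0922} without needing to know that $\gamma_n$ converges or that $\psi_\infty$ solves any equation. Your route via the limiting eigenvalue equation also works (one checks $\gamma_n\in[-1,0]$ from the Rayleigh quotient, so a subsequence converges), but the lower-semicontinuity argument is shorter and avoids that bookkeeping.
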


\begin{proof}
We recall that the unique positive solution $u_{\mathcal{D}}$ of \eqref{Dp} is asymptotically stable, i.e., 
\begin{align} \label{gam1D0922}
\gamma_{1, \mathcal{D}} = \inf\biggl\{ \int_\Omega \left( |\nabla \varphi|^2 - \varphi^2 + p u_{\mathcal{D}}^{p-1}\varphi^2 \right): \varphi \in H^1_0(\Omega), \ \int_{\Omega}\varphi^2=1 \biggr\}>0.     
\end{align}

(i) Assume to the contrary that problem \eqref{p} has two distinct positive solutions 
$(\lambda_n, u_n)$ and $(\lambda_n, v_n)$ with  $\lambda_n\to \infty$. Note that $u_n, v_n<1$ in $\overline{\Omega}$. We may assume that $u_n, v_n \rightarrow u_{\mathcal{D}}$ in $H^1(\Omega)$ and a.e.\ in $\Omega$. 
The difference $w_n = u_n-v_n$ (may change sign) allows that
\begin{align} \label{wn1409}
\int_\Omega \biggl( \nabla w_n \nabla \varphi - w_n \varphi \biggr) + \int_{\Omega} \biggl\{  (v_n+w_n)^p - v_n^p \biggr\} \varphi 
+ \lambda_n \int_{\Gamma_n} \frac{(v_n+w_n)^q - v_n^q}{w_n} \, w_n \varphi = 0 
\end{align}
for $\varphi \in H^1(\Omega)$, where $\Gamma_n = \{ x\in \partial\Omega : w_n(x)\neq 0 \}$. 
Note that $\| w_n\| \rightarrow 0$, and $w_n \rightarrow 0$ a.e.\ in $\Omega$. 

Substituting $\varphi = w_n$ into \eqref{wn1409}, the mean value theorem shows the existence of $\theta_n \in (0,1)$ such that 
\begin{align*}
E(w_n) + \int_{\Omega} p(v_n + \theta_n w_n)^{p-1}w_n^2 + \lambda_n \int_{\Gamma_n} \frac{(v_n+w_n)^q - v_n^q}{w_n} w_n^2 = 0;  
\end{align*}
thus, $\psi_n = \frac{w_n}{\| w_n\|}$ implies that 
\begin{align} \label{psim1436}
E(\psi_n) + \int_{\Omega} p(v_n + \theta_n w_n)^{p-1}\psi_n^2 + \lambda_n \int_{\Gamma_n} \frac{(v_n+w_n)^q - v_n^q}{w_n} \psi_n^2 = 0. 
\end{align}
From $\|\psi_n \|=1$, we infer that up to a subsequence, $\psi_n \rightharpoonup \psi_\infty$, 
$\psi_n \rightarrow \psi_\infty$ in $L^2(\Omega)$ and $L^2(\partial\Omega)$ for some $\psi_\infty \in H^1(\Omega)$. Then, we claim that $\psi_\infty \in H^1_0(\Omega)$ and $\psi_\infty \neq 0$. From \eqref{psim1436}, we deduce that  
\begin{align*}
\lambda_n \int_{\Gamma_n} \frac{(v_n+w_n)^q - v_n^q}{w_n} \psi_n^2 \leq \int_\Omega \psi_n^2 \leq 1. 
\end{align*}
We observe that 
\begin{align*}
\frac{(v_n+w_n)^q - v_n^q}{w_n} \geq q \quad\mbox{ on } \Gamma_n 
\end{align*}
because $u_n, v_n <1$ in $\overline{\Omega}$. It follows that 
$\lambda_n q \int_{\partial\Omega} \psi_n^2\leq 1$. Passing to the limit, we deduce that $\psi_\infty \in H^1_0(\Omega)$. Indeed, $\psi_\infty\neq 0$ by Lemma \ref{lem1719}.  

Then, we assert in \eqref{psim1436} that 
\begin{align*}
\int_{\Omega} p(v_n + \theta_n w_n)^{p-1}\psi_n^2 \longrightarrow \int_\Omega pu_{\mathcal{D}}^{p-1} \psi_\infty^2.     
\end{align*}
Indeed, we use 
\begin{align*}
\int_{\Omega} p(v_n + \theta_n w_n)^{p-1}\psi_n^2 
= \int_\Omega p(v_n + \theta_n w_n)^{p-1} \psi_\infty^2 + \int_\Omega  p(v_n + \theta_n w_n)^{p-1} (\psi_n^2 - \psi_\infty^2). 
\end{align*}
Since $v_n \rightarrow u_{\mathcal{D}}$ and $w_n \rightarrow 0$ a.e.\ in $\Omega$ and $u_n, v_n<1$ in $\Omega$, the Lebesgue dominated convergence theorem shows that 
\begin{align*}
\int_\Omega p(v_n + \theta_n w_n)^{p-1} \psi_\infty^2 \longrightarrow \int_\Omega pu_{\mathcal{D}}^{p-1}\psi_\infty^2. 
\end{align*}
Using the fact $\int_\Omega |\psi_n^2 - \psi_\infty^2| \rightarrow 0$ yields 
\begin{align*}
\int_\Omega  p(v_n + \theta_n w_n)^{p-1} (\psi_n^2 - \psi_\infty^2) \longrightarrow 0,      
\end{align*}
as desired. 

Then, the weak lower semicontinuity allows us to deduce from \eqref{psim1436} that 
\begin{align*}
\int_\Omega \biggl(|\nabla \psi_\infty|^2 - \psi_\infty^2 + pu_{\mathcal{D}}^{p-1} \psi_\infty^2 \biggr) \leq \varliminf_{n\to \infty} \biggl( E(\psi_n) + \int_{\Omega} p(v_n + \theta_n w_n)^{p-1}\psi_n^2 \biggr) \leq 0, 
\end{align*}
which contradicts $\psi_\infty \in H^1_0(\Omega)$ and $\psi_\infty \neq 0$ in view of \eqref{gam1D0922}.

(ii) On the basis of \eqref{gam0956}, we claim that $\gamma_1 > 0$ for sufficiently large $\lambda>0$. Assume by contradiction that a positive solution $(\lambda_n,u_n)$ of \eqref{p} with the condition that $\lambda_n \to \infty$ and $u_n>0$ in $\overline{\Omega}$ satisfies $\gamma_n:=\gamma_{1,n}(\lambda_n,u_n)\leq0$. This means that
\begin{align} \label{gamn0925}
\int_\Omega \biggl( |\nabla \varphi_n|^2 - \varphi_n^2 + pu_n^{p-1} \varphi_n^2 \biggr) + \lambda_n q \int_{\partial \Omega}u_n^{q-1} \varphi_n^2 = \gamma_n \leq0, 
\end{align}
where $\varphi_n:=\varphi_{1,n}$, normalized as
\begin{align} \label{norm0910}
\int_{\Omega}\varphi_{n}^2 + \int_{\partial\Omega}\varphi_{n}^2 = 1. 
\end{align}
Because $\int_\Omega |\nabla \varphi_n|^2\leq \int_\Omega \varphi_n^2\leq 1$ from \eqref{gamn0925} and \eqref{norm0910}, $\| \varphi_n \|$ is bounded, which implies that up to a subsequence, $\varphi_n \rightharpoonup \varphi_\infty$, $\varphi_n \rightarrow \varphi_\infty$ in $L^2(\Omega)$ and $L^2(\partial\Omega)$, and $\varphi_n \rightarrow \varphi_\infty$ a.e.\ in $\Omega$ for some $\varphi_\infty \in H^1(\Omega)$. 
Since $u_n^{q-1}\geq1$ from Theorem \ref{thm0}(I), assertion \eqref{gamn0925} gives us 
\begin{align*}
\lambda_n q \int_{\partial \Omega} \varphi_n^2 \leq \int_{\Omega} \varphi_n^2 \leq1.  
\end{align*}
Passing to the limit, $\varphi_\infty = 0$ on $\partial\Omega$; thus, $\varphi_\infty \in H^1_0(\Omega)$. From \eqref{norm0910},  $\int_\Omega \varphi_\infty^2=1$ is also deduced.

By the weak lower semicontinuity, we derive from \eqref{gamn0925} that 
\begin{align} 
\int_\Omega \biggl( |\nabla \varphi_\infty|^2 - \varphi_\infty^2 + pu_{\mathcal{D}}^{p-1} \varphi_\infty^2 
\biggr) \leq \varliminf_{n\to \infty} \int_\Omega \biggl( |\nabla \varphi_n|^2 - \varphi_n^2 + pu_{n}^{p-1} \varphi_n^2 \biggr) \leq0 \label{wlsc0926}  
\end{align}
Indeed, on the basis of the facts that $u_n \rightarrow u_{\mathcal{D}}$ in $H^1(\Omega)$ and $u_n<1$ in $\overline{\Omega}$ (see Theorem \ref{thm0}(I) and Proposition \ref{prop:asympt}), the Lebesgue dominated convergence theorem shows that 
\[
\int_\Omega u_{n}^{p-1} \varphi_n^2 
= \int_\Omega u_{n}^{p-1} \varphi_\infty^2 +\int_\Omega u_{n}^{p-1} (\varphi_n^2 - \varphi_\infty^2) \longrightarrow \int_\Omega u_{\mathcal{D}}^{p-1} \varphi_\infty^2, 
\]
where we have used that $u_n \rightarrow u_{\mathcal{D}}$ a.e.\ in $\Omega$. 
Assertion \eqref{wlsc0926} contradicts $\varphi_\infty \in H^1_0(\Omega)$ and $\int_\Omega \varphi_\infty^2=1$ in view of \eqref{gam1D0922}.  \end{proof}

\begin{rem} \label{rem:stab}

If we construct a positive solution $u$ of \eqref{p} such that $u>0$ in $\overline{\Omega}$ without using \eqref{subcp}, then assertion (ii) of Proposition  \ref{prop:unista} remains valid for all $p>1$. 
\end{rem}

\section{Sub- and supersolutions} 

\label{sec:sub}


Consider the case where $\beta_\Omega < 1$ or where $\beta_\Omega = 1$ and $pq>1$. Then, we first construct small positive subsolutions of \eqref{p} and use them to establish an {\it a priori} lower bound for positive solutions $(\lambda, u)$ of \eqref{p} satisfying $u>0$ in $\overline{\Omega}$. For a fixed $\tau > 0$ and with a parameter $\varepsilon > 0$, we set
\[
\phi_\varepsilon (x) = \varepsilon (\phi_\Omega (x) + \varepsilon^\tau), \quad x \in \overline{\Omega},  
\]
which implies that $\phi_\varepsilon \in C^{2+\theta}(\overline{\Omega})$ and $\phi_\varepsilon > 0$ in $\overline{\Omega}$. 

We then use $\phi_\varepsilon$ to formulate the following {\it a priori} lower bound for positive solutions $u>0$ in $\overline{\Omega}$ of \eqref{p}.

\begin{lem} \label{prop:sub}
Assume that $\beta_\Omega < 1$ or that $\beta_\Omega = 1$ and $pq>1$. Let $\tau  > \frac{1-q}{q}$ when $\beta_\Omega < 1$, and let $\frac{1-q}{q} < \tau < p-1$ when $\beta_\Omega = 1$ and $pq>1$. 
Then, for $\Lambda > 0$ there exists $\overline{\varepsilon}=\overline{\varepsilon}(\tau, \Lambda)>0$ such that $\phi_{\varepsilon}$ is a subsolution of \eqref{p}, provided that 
$\lambda \in [0, \Lambda]$ and $\varepsilon \in (0, \overline{\varepsilon}]$. 
Furthermore, $u \geq \phi_{\overline{\varepsilon}}$ in $\overline{\Omega}$ for a positive solution $u > 0$ in $\overline{\Omega}$ of \eqref{p} with $\lambda \in [0,\Lambda]$. Here, $\overline{\varepsilon}$ does not depend on $\lambda \in [0,\Lambda]$. 
\end{lem}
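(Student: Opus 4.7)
The plan is to verify the two assertions separately, first establishing that $\phi_\varepsilon$ is a subsolution by direct computation, then deriving the lower bound via a sweeping argument anchored at a supremum $\varepsilon^*$. Using $-\Delta\phi_\Omega = \beta_\Omega\phi_\Omega$, one finds $-\Delta\phi_\varepsilon = \beta_\Omega\varepsilon\phi_\Omega$, so the interior subsolution inequality $-\Delta\phi_\varepsilon \leq \phi_\varepsilon - \phi_\varepsilon^p$ is equivalent to
\[
\phi_\varepsilon^p \leq (1-\beta_\Omega)\varepsilon\phi_\Omega + \varepsilon^{\tau+1}.
\]
When $\beta_\Omega<1$ the right-hand side is bounded below by $\varepsilon^{\tau+1}$, and since $\phi_\varepsilon^p = O(\varepsilon^p)$ uniformly, any $\tau>0$ suffices for $\varepsilon$ small. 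When $\beta_\Omega=1$ the linear term vanishes and one needs $\phi_\varepsilon^p \leq \varepsilon^{\tau+1}$, which forces $p > \tau+1$, explaining the upper bound $\tau < p-1$. On $\partial\Omega$, where $\phi_\varepsilon = \varepsilon^{\tau+1}$ and $\frac{\partial\phi_\varepsilon}{\partial\nu} = \varepsilon\frac{\partial\phi_\Omega}{\partial\nu}$, the required inequality $\frac{\partial\phi_\varepsilon}{\partial\nu} \leq -\lambda\phi_\varepsilon^q$ reduces via \eqref{bdry1513} to $\lambda\varepsilon^{q(\tau+1)-1} \leq c_1$, which holds uniformly in $\lambda\in[0,\Lambda]$ for $\varepsilon$ small precisely because $q(\tau+1)>1$ under the hypothesis $\tau>\tfrac{1-q}{q}$.

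For the comparison $u \geq \phi_{\overline{\varepsilon}}$, compactness together with $u>0$ in $\overline{\Omega}$ yields $u \geq m > 0$, so $T := \{\varepsilon\in(0,\overline{\varepsilon}] : u \geq \phi_\varepsilon \text{ in } \overline{\Omega}\}$ contains all sufficiently small $\varepsilon$, is closed by continuity of $\varepsilon \mapsto \phi_\varepsilon \in C(\overline{\Omega})$, and hence $\varepsilon^* := \sup T \in T$. Assuming for contradiction $\varepsilon^*<\overline{\varepsilon}$, one notes that $w := u-\phi_{\varepsilon^*}\ge0$ must vanish at some $x_0\in\overline{\Omega}$, otherwise a positive gap $w \geq \eta > 0$ and continuity of $\varepsilon\mapsto\phi_\varepsilon$ would allow raising $\varepsilon$ slightly while preserving membership in $T$. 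Combining the PDE for $u$ with the subsolution inequality for $\phi_{\varepsilon^*}$ and applying the mean value theorem yields $-\Delta w + c(x)w \geq 0$ in $\Omega$ with $c$ bounded (using $u,\phi_{\varepsilon^*}<1$ from Theorem \ref{thm0}(I)).

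Two cases remain. If $x_0 \in \Omega$, the strong maximum principle gives $w \equiv 0$ in $\overline{\Omega}$, hence $u \equiv \phi_{\varepsilon^*}$; matching the boundary conditions then yields $-\lambda(\varepsilon^*)^{q(\tau+1)} = \varepsilon^*\frac{\partial\phi_\Omega}{\partial\nu}$ on $\partial\Omega$, which by \eqref{bdry1513} forces $\lambda(\varepsilon^*)^{q(\tau+1)-1} \geq c_1$, contradicting $q(\tau+1)>1$ once $\overline{\varepsilon}$ is small. If $x_0 \in \partial\Omega$ (and $w>0$ in $\Omega$, else the previous case applies via SMP), Hopf's lemma demands $\frac{\partial w}{\partial\nu}(x_0)<0$, while direct computation gives
\[
\frac{\partial w}{\partial\nu}(x_0) = -\lambda(\varepsilon^*)^{q(\tau+1)} - \varepsilon^*\frac{\partial\phi_\Omega}{\partial\nu}(x_0) \geq \varepsilon^*\bigl(c_1 - \Lambda(\varepsilon^*)^{q(\tau+1)-1}\bigr) > 0
\]
for $\overline{\varepsilon}$ chosen small enough depending on $\Lambda$, the desired contradiction. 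The delicate point I expect is the openness step that extracts a genuine contact at some $x_0$; once this compactness argument is in hand, the SMP/Hopf dichotomy closes the proof, and the independence of $\overline{\varepsilon}$ on $\lambda\in[0,\Lambda]$ is inherited from the uniformity already built into the subsolution construction.
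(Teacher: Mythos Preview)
Your approach matches the paper's (sweeping argument plus strong maximum principle/Hopf lemma), and the second assertion is handled correctly. There is, however, a gap in the interior subsolution verification for $\beta_\Omega<1$. You discard the term $(1-\beta_\Omega)\varepsilon\phi_\Omega$, bound the right side below by $\varepsilon^{\tau+1}$, and compare with $\phi_\varepsilon^p=O(\varepsilon^p)$; this gives the desired inequality only when $\tau<p-1$. But the lemma allows any $\tau>\frac{1-q}{q}$, and when $pq\le 1$ one has $\frac{1-q}{q}\ge p-1$, so \emph{every} admissible $\tau$ lies outside the range where your crude bound works. The remedy is to keep the discarded term and split: where $\phi_\Omega\ge\varepsilon^\tau$ one has $\phi_\varepsilon^p\le 2^p\varepsilon^p\phi_\Omega^p\le 2^p\|\phi_\Omega\|_\infty^{p-1}\varepsilon^{p-1}\cdot\varepsilon\phi_\Omega$, which is absorbed by $(1-\beta_\Omega)\varepsilon\phi_\Omega$ for small $\varepsilon$; where $\phi_\Omega<\varepsilon^\tau$ one has $\phi_\varepsilon^p\le 2^p\varepsilon^{p(1+\tau)}=o(\varepsilon^{\tau+1})$ since $(p-1)(1+\tau)>0$.

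A smaller point: in the interior-contact case $x_0\in\Omega$, once SMP forces $u\equiv\phi_{\varepsilon^*}$ you need not pass to the boundary. The interior subsolution inequality is strict for $\varepsilon$ small, so $u\equiv\phi_{\varepsilon^*}$ already contradicts $-\Delta u=u-u^p$; this is exactly the ``and $\not\equiv 0$'' in the paper's display \eqref{KOm}. Your boundary detour is not wrong, just unnecessary.
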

\begin{proof}
We only consider the case where $\beta_\Omega = 1$ and $pq>1$. The case where $\beta_\Omega < 1$ is proved similarly. 
First, we verify the former assertion. We take $0<\varepsilon\leq 1$ and then use the condition $p-\tau -1>0$ to deduce that  
\begin{align*}
-\Delta \phi_\varepsilon -\phi_\varepsilon +\phi_\varepsilon^p 
\leq \varepsilon^{1+\tau}\biggl\{ -1 + \varepsilon^{p-\tau -1} \left( 1+\max_{\overline{\Omega}}\phi_\Omega \right)^p \biggr\}\leq 0 \quad\mbox{ in } \Omega  
\end{align*}
if $\varepsilon > 0$ is small. For $\Lambda > 0$ we use \eqref{bdry1513} and the condition $\tau > \frac{1-q}{q}$ to deduce that 
\begin{align*}
\frac{\partial \phi_\varepsilon}{\partial \nu} + \lambda \phi_\varepsilon^q 
\leq -c_1 \varepsilon + \lambda \varepsilon^{(1+\tau)q} 
\leq \varepsilon (-c_1 + \Lambda \varepsilon^{q+\tau q -1}) \leq 0 \quad 
\mbox{ on } \partial\Omega 
\end{align*}
if $0<\varepsilon\leq (c_1/\Lambda)^{1/(q+\tau q-1)}$. The desired assertion follows. 

Next, we argue by contradiction to verify the latter assertion. Assume by contradiction that $u\not\geq \phi_{\overline{\varepsilon}}$ in $\overline{\Omega}$ for some positive solution $u>0$ in $\overline{\Omega}$ with $\lambda \in [0, \Lambda]$. Because $\varepsilon \mapsto \phi_\varepsilon$ is increasing and $\phi_\varepsilon \rightarrow 0$ uniformly in $\overline{\Omega}$, we can take $\varepsilon_1\in (0, \overline{\varepsilon})$ such that 
\begin{align}
\left\{ \begin{array}{l}
u\geq \phi_{\varepsilon_1} \ \ \mbox{ in } \ \overline{\Omega}, \\
u(x_1) = \phi_{\varepsilon_1}(x_1) \ \ \mbox{ for some } \ x_1 \in \overline{\Omega}. 
\end{array} \right.  \label{bdry1251}
\end{align}
Take $c>0$ such that $u, \phi_{\varepsilon_1}\geq c$ in $\overline{\Omega}$; then, choose $K>0$ sufficiently large so that $f_K(t)=Kt + t-t^p$ is increasing for $t \in \left[ 0, \, \max_{\overline{\Omega}}u \right]$ and $M>0$ sufficiently large so that $M-\Lambda q c^{q-1}>0$. We use the subsolution $\phi_{\varepsilon_1}$ (not a positive solution of \eqref{p}) to deduce that 
\begin{align} \label{KOm}
(-\Delta + K)(u-\phi_{\varepsilon_1})\geq f_K(u) - f_K(\phi_{\varepsilon_1}) \geq 0 \ (\mbox{and} \not\equiv 0) \quad\mbox{ in } \ \Omega,
\end{align}
and for $x \in \partial\Omega$ satisfying $u>\phi_{\varepsilon_1}$, 
\begin{align}
\biggl( \frac{\partial}{\partial \nu} + M \biggr) (u-\phi_{\varepsilon_1})
&\geq -\lambda u^q + \lambda \phi_{\varepsilon_1}^q + M(u-\phi_{\varepsilon_1}) \nonumber \\ 
&= \biggl( M - \lambda \frac{u^q - \phi_{\varepsilon_1}^q}{u - \phi_{\varepsilon_1}} \biggr)(u-\phi_{\varepsilon_1}) \nonumber \\
& \geq (M - \Lambda q c^{q-1})(u-\phi_{\varepsilon_1})>0. \label{KpOm}
\end{align}
Thus, the strong maximum principle and boundary point lemma are applicable to infer that $u-\phi_{\varepsilon_1}>0$ in $\overline{\Omega}$, which contradicts \eqref{bdry1251}. 
\end{proof}

On the basis of Lemma \ref{prop:sub}, we construct minimal and maximal positive solutions of \eqref{p} as follows. 
\begin{prop} \label{prop:maxmin} 
Assume that $\beta_\Omega < 1$ or that $\beta_\Omega = 1$ and $pq>1$. Then, problem \eqref{p} has a minimal positive solution $\underline{u}_{\lambda} \in C^{2+\theta}(\overline{\Omega})$ and a maximal positive solution $\overline{u}_\lambda \in C^{2+\theta}(\overline{\Omega})$ for each $\lambda > 0$ such that $0<\underline{u}_\lambda \leq \overline{u}_\lambda$ in $\overline{\Omega}$, meaning that any positive solution $u$ of \eqref{p} 
with the condition that $u>0$ in $\overline{\Omega}$ 
satisfies $\underline{u}_\lambda \leq u \leq \overline{u}_\lambda$ in $\overline{\Omega}$. Moreover, both $\underline{u}_\lambda$ and $\overline{u}_{\lambda}$ are weakly stable, i.e., $\gamma_1(\lambda, \underline{u}_\lambda), \gamma_1(\lambda,\overline{u}_\lambda)\geq 0$. 
\end{prop}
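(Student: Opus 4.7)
The plan is to invoke the sub- and supersolution method built on what is already available. The constant function $\overline{u}\equiv 1$ is a weak supersolution of \eqref{p} for every $\lambda\geq 0$, since $-\Delta 1-1+1^{p}=0$ and $\tfrac{\partial 1}{\partial\nu}+\lambda\cdot 1^{q}=\lambda\geq 0$; and by Theorem \ref{thm0}(I) together with Lemma \ref{prop:sub}, every positive solution $u$ with $u>0$ in $\overline{\Omega}$ lies between $\phi_{\overline{\varepsilon}}$ and $1$. Choosing $K,M>0$ large enough that $t\mapsto Kt+t-t^{p}$ is nondecreasing on $[0,1]$ and $t\mapsto Mt-\lambda t^{q}$ is nondecreasing on $[\min_{\overline{\Omega}}\phi_{\overline{\varepsilon}},1]$, I set up the standard linear iteration
\begin{align*}
(-\Delta+K)u_{n+1}&=Ku_{n}+u_{n}-u_{n}^{p}\quad\text{in }\Omega,\\
\tfrac{\partial u_{n+1}}{\partial\nu}+Mu_{n+1}&=Mu_{n}-\lambda u_{n}^{q}\quad\text{on }\partial\Omega,
\end{align*}
which is well posed in $C^{2+\theta}(\overline{\Omega})$ by elliptic regularity whenever the iterates remain uniformly bounded below by $\phi_{\overline{\varepsilon}}$.

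Starting the iteration from $u_{0}=\phi_{\overline{\varepsilon}}$ produces an increasing sequence trapped between $\phi_{\overline{\varepsilon}}$ and $1$, whose pointwise limit $\underline{u}_{\lambda}\in C^{2+\theta}(\overline{\Omega})$ is a positive solution of \eqref{p}; starting from $u_{0}=1$ produces a decreasing sequence converging to $\overline{u}_{\lambda}\in C^{2+\theta}(\overline{\Omega})$. Minimality of $\underline{u}_{\lambda}$ among positive solutions $u$ with $u>0$ in $\overline{\Omega}$ follows by induction: $\phi_{\overline{\varepsilon}}\leq u$ by Lemma \ref{prop:sub}, and the monotonicity of the scheme propagates $u_{n}\leq u$ to $u_{n+1}\leq u$, hence $\underline{u}_{\lambda}\leq u$. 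Maximality of $\overline{u}_{\lambda}$ follows symmetrically from $u\leq 1$.

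For the weak stability, I will argue by contradiction via a linearization perturbation. Suppose $\gamma_{1}(\lambda,\underline{u}_{\lambda})<0$ with positive eigenfunction $\varphi_{1}\in C^{2+\theta}(\overline{\Omega})$. A short Taylor expansion at $t=0$ gives
\begin{align*}
-\Delta w_{t}-w_{t}+w_{t}^{p}=-t\gamma_{1}\varphi_{1}+O(t^{2}),\qquad \tfrac{\partial w_{t}}{\partial\nu}+\lambda w_{t}^{q}=-t\gamma_{1}\varphi_{1}+O(t^{2}),
\end{align*}
for $w_{t}:=\underline{u}_{\lambda}-t\varphi_{1}$, so $w_{t}$ becomes a strict \emph{supersolution} of \eqref{p} for all small $t>0$. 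Choosing $\varepsilon\in(0,\overline{\varepsilon}]$ small enough, Lemma \ref{prop:sub} provides a subsolution $\phi_{\varepsilon}$ with $\phi_{\varepsilon}\leq w_{t}$ in $\overline{\Omega}$, and iterating the scheme between $\phi_{\varepsilon}$ and $w_{t}$ produces a positive solution $u^{*}\leq w_{t}<\underline{u}_{\lambda}$ with $u^{*}>0$ in $\overline{\Omega}$, contradicting minimality. The case of $\overline{u}_{\lambda}$ is symmetric: $v_{t}:=\overline{u}_{\lambda}+t\varphi_{1}$ is a strict subsolution, and $v_{t}\leq 1$ for $t$ small since $\overline{u}_{\lambda}<1$; iterating between $v_{t}$ and $1$ yields a positive solution strictly above $\overline{u}_{\lambda}$.

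The main technical delicacy is the boundary nonlinearity $-\lambda u^{q}$: because $u\mapsto u^{q}$ fails the one-sided Lipschitz condition at $u=0$ (as noted in the introduction), the large-$M$ trick only works when iterates remain uniformly positive on $\partial\Omega$. This is exactly what the \emph{a priori} lower bound $u\geq\phi_{\overline{\varepsilon}}$ of Lemma \ref{prop:sub} supplies, and it is also what keeps the linearization perturbation in the stability step within the range on which the iteration scheme is monotone.
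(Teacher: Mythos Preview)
Your proposal is correct and follows essentially the same approach as the paper: use $1$ as a supersolution and $\phi_{\varepsilon}$ from Lemma \ref{prop:sub} as a subsolution, note that every positive solution $u>0$ in $\overline{\Omega}$ is trapped in $[\phi_{\overline{\varepsilon}},1]$ by Theorem \ref{thm0}(I) and Lemma \ref{prop:sub}, and then apply the sub- and supersolution machinery. The only difference is presentational: the paper dispatches both existence of extremal solutions and their weak stability in one line by citing Amann \cite[(2.1)~Theorem]{Am76N} and \cite[Proposition 7.8]{Am76}, whereas you unpack these results by writing out the monotone iteration (with the crucial observation that the lower bound $\phi_{\overline{\varepsilon}}>0$ on $\partial\Omega$ makes $t\mapsto Mt-\lambda t^{q}$ monotone on the relevant order interval) and the perturbation-by-eigenfunction argument for stability explicitly.
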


\begin{proof}
It is clear that $(\lambda, 1)$ is a supersolution of \eqref{p}. Choose $\varepsilon_0>0$ such that $\phi_{\varepsilon_0}\leq 1$ in $\overline{\Omega}$; then, Lemma \ref{prop:sub} states that $(\lambda,  \phi_{\varepsilon_0})$ is a subsolution of \eqref{p}. By Theorem \ref{thm0}(I) and Lemma \ref{prop:sub}, a positive solution $u>0$ in $\overline{\Omega}$ of \eqref{p} implies that  $\phi_{\varepsilon_0}\leq u\leq 1$ in $\overline{\Omega}$. 
Thus, this proposition is a direct consequence of applying \cite[(2.1)Theorem]{Am76N} and \cite[Proposition 7.8]{Am76}. 
\end{proof}

\begin{rem} \label{rem:p3.2}
In view of the construction, Lemma \ref{prop:sub} and Proposition \ref{prop:maxmin} remain valid for any $p>1$; therefore, Propositions \ref{prop:asympt}, \ref{prop:unprofi}, \ref{prop:unista}(ii) hold with any $p>1$ for the positive solution $(\lambda_n,u_n)$ and $(\lambda,u)$ of 
\eqref{p} with $\lambda_n\to \infty$ that are constructed by Proposition \ref{prop:maxmin} (see Remarks \ref{rem:behave} and \ref{rem:stab}). 
\end{rem}

In the case where $\beta_\Omega<1$, Propositions \ref{prop:unista} and \ref{prop:maxmin} ensure the existence of a unique positive solution $u(\lambda)$ of \eqref{p} for $\lambda > \underline{\Lambda}$, which is asymptotically stable.  Using the implicit function theorem provides us with the following result. 

\begin{cor} \label{cor:curve}
Assume that $\beta_\Omega < 1$. Then, $\{ (\lambda, u(\lambda)) : \lambda > \underline{\Lambda} \}$ is a $C^\infty$ curve, i.e., $\lambda \mapsto u(\lambda) \in C^{2+\theta}(\overline{\Omega})$ is $C^\infty$. Moreover, it is decreasing, i.e., $u(\lambda_1)>u(\lambda_2)$ in $\overline{\Omega}$ for $\lambda_2>\lambda_1>\underline{\Lambda}$. 
\end{cor}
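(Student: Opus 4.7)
The plan is to apply the implicit function theorem at each positive solution $(\lambda, u(\lambda))$ with $\lambda > \underline{\Lambda}$. Let $\mathcal{U} \subset C^{2+\theta}(\overline{\Omega})$ be the open set of functions strictly positive on $\overline{\Omega}$; by Theorem \ref{thm0}(I) (or by the lower bound $\phi_{\overline{\varepsilon}}$ in Lemma \ref{prop:sub}) the solution $u(\lambda)$ lies in $\mathcal{U}$. On $\mathcal{U}$ the nonlinearities $u \mapsto u^p$ and $u \mapsto u^q$ are $C^\infty$, so
\[
F(\lambda, u) := \bigl( -\Delta u - u + u^p, \ \tfrac{\partial u}{\partial \nu} + \lambda u^q \bigr) : \mathbb{R} \times \mathcal{U} \longrightarrow C^\theta(\overline{\Omega}) \times C^{1+\theta}(\partial \Omega)
\]
is smooth. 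Its partial Fr\'echet derivative $D_u F(\lambda, u(\lambda))$ sends $\varphi \in C^{2+\theta}(\overline{\Omega})$ to $\bigl( -\Delta \varphi - \varphi + p u^{p-1} \varphi, \ \partial_\nu \varphi + \lambda q u^{q-1} \varphi \bigr)$, and its kernel coincides with the eigenspace of problem \eqref{gam0956} for $\gamma = 0$. Since Proposition \ref{prop:unista}(ii) gives $\gamma_1(\lambda, u(\lambda)) > 0$, this kernel is trivial, and combined with the Fredholm property for second-order elliptic problems with Robin-type boundary conditions, $D_u F(\lambda, u(\lambda))$ is a topological isomorphism. The IFT then yields a local $C^\infty$ branch of positive solutions through $(\lambda, u(\lambda))$, and uniqueness from Proposition \ref{prop:unista}(i) forces these local branches to patch into the single global $C^\infty$ curve $\lambda \mapsto u(\lambda)$ on $(\underline{\Lambda}, \infty)$.

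For the monotonicity, I would differentiate $F(\lambda, u(\lambda)) = 0$ in $\lambda$: setting $u' := \partial u(\lambda)/\partial \lambda \in C^{2+\theta}(\overline{\Omega})$, one finds
\[
\begin{cases}
-\Delta u' - u' + p u^{p-1} u' = 0 & \text{in } \Omega, \\
\tfrac{\partial u'}{\partial \nu} + \lambda q u^{q-1} u' = -u^q & \text{on } \partial \Omega,
\end{cases}
\]
with strictly negative boundary datum $-u^q$. The goal is $u' < 0$ on $\overline{\Omega}$, which by integration in $\lambda$ produces $u(\lambda_1) > u(\lambda_2)$ pointwise on $\overline{\Omega}$ whenever $\underline{\Lambda} < \lambda_1 < \lambda_2$. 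The natural device is to set $w := u'/\varphi_1$, where $\varphi_1 > 0$ on $\overline{\Omega}$ is the principal eigenfunction of \eqref{gam0956} corresponding to $\gamma_1 > 0$. Using $-\Delta \varphi_1 - \varphi_1 + p u^{p-1} \varphi_1 = \gamma_1 \varphi_1$ in $\Omega$ and $\partial_\nu \varphi_1 + \lambda q u^{q-1} \varphi_1 = \gamma_1 \varphi_1$ on $\partial \Omega$, a direct computation transforms the system for $u'$ into
\[
-\Delta w - 2\, \tfrac{\nabla \varphi_1}{\varphi_1} \cdot \nabla w + \gamma_1 w = 0 \ \text{ in } \Omega, \qquad \tfrac{\partial w}{\partial \nu} + \gamma_1 w = -\tfrac{u^q}{\varphi_1} < 0 \ \text{ on } \partial \Omega.
\]
Since the zeroth-order coefficient $\gamma_1$ is now strictly positive, the strong maximum principle in $\Omega$ and the Hopf boundary point lemma on $\partial \Omega$ both apply in their classical form, and a nonnegative maximum of $w$ is immediately excluded by the strict negative sign of the boundary datum. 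Hence $w < 0$ on $\overline{\Omega}$, i.e., $u'(\lambda) < 0$ on $\overline{\Omega}$, as required.

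The main obstacle is that the zeroth-order coefficient $p u^{p-1} - 1$ in the linearized operator may take negative sign (since $u < 1$ on $\overline{\Omega}$), so the standard maximum principles cannot be applied directly to the equation for $u'$; it is precisely the substitution $w = u'/\varphi_1$, made possible by $\gamma_1 > 0$ from Proposition \ref{prop:unista}(ii), that reduces the situation to a form where the classical arguments succeed. The isomorphism step of the IFT is otherwise routine once Proposition \ref{prop:unista} is in hand.
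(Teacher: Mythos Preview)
Your treatment of the $C^\infty$ smoothness via the implicit function theorem is essentially identical to the paper's: invoke $\gamma_1(\lambda,u(\lambda))>0$ from Proposition~\ref{prop:unista}(ii) to make $D_uF$ invertible, apply the IFT locally, and use the uniqueness in Proposition~\ref{prop:unista}(i) to glue the local branches into a single smooth curve. (One small remark: Theorem~\ref{thm0}(I) does not by itself give $u>0$ on all of $\overline{\Omega}$; the strong positivity of $u(\lambda)$ is rather inherited from the construction in Proposition~\ref{prop:maxmin}, or from Proposition~\ref{pop:posi}.)

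For the monotonicity you take a genuinely different route. The paper argues order-theoretically: for $\lambda_1<\lambda_2$ the solution $u(\lambda_1)$ is a supersolution of \eqref{p} at $\lambda_2$, a small $\phi_\varepsilon$ from Lemma~\ref{prop:sub} is a subsolution beneath it, the sub-/supersolution method yields a solution trapped in $[\phi_\varepsilon,u(\lambda_1)]$, uniqueness forces it to be $u(\lambda_2)$, and the strong maximum principle/Hopf lemma (as in \eqref{KOm}--\eqref{KpOm}) upgrades $u(\lambda_2)\le u(\lambda_1)$ to strict inequality. Your argument instead differentiates the branch, obtains the linear problem for $u'$, and uses the substitution $w=u'/\varphi_1$ (with $\varphi_1>0$ the principal eigenfunction) to convert it into an equation with strictly positive zeroth-order term $\gamma_1>0$, where the classical maximum principle and Hopf lemma apply directly to the strictly negative boundary datum. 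Your computation and the exclusion of a nonnegative maximum for $w$ are correct. The paper's approach has the advantage that it does not require differentiability of the branch and extends immediately to comparisons across the uniqueness threshold (cf.\ assertion (i-c) of Theorem~\ref{thm:leq1}); your approach is more direct within $(\underline{\Lambda},\infty)$ and yields the sharper pointwise information $\partial_\lambda u(\lambda)<0$ on $\overline{\Omega}$.
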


\begin{proof}
We verify the first assertion. Let $(\lambda_0,u(\lambda_0))$ be the unique positive solution of \eqref{p} for $\lambda_0 >\underline{\Lambda}$. Since $\gamma_1(\lambda_0, u(\lambda_0))>0$, the implicit function theorem applies at $(\lambda_0,u_0)$; then, we deduce, thanks to the uniqueness, that $\{ (\lambda, u(\lambda)) : \lambda_1 < \lambda \leq \lambda_2 \}$ is a $C^\infty$ curve for $\lambda_1<\lambda_0<\lambda_2$ such that $u(\lambda)$ is asymptotically stable. The implicit function theorem applies again at $(\lambda_2,u(\lambda_2))$; then, the curve is continued until $\lambda = \lambda_3>\lambda_2$. Repeating the same procedure, the curve is continued to $\lambda=\infty$ thanks to the {\it a priori} upper and lower bounds (Theorem \ref{thm0}(I) and Lemma \ref{prop:sub}), as desired.

We next verify the second assertion. 
If $\lambda_1<\lambda_2$, then $u(\lambda_1)$ is a supersolution of \eqref{p} for $\lambda = \lambda_2$. By Lemma \ref{prop:sub}, it is possible to construct a subsolution $\phi_\varepsilon$ of \eqref{p} for $\lambda = \lambda_2$ such that $0<\phi_\varepsilon \leq u(\lambda_1)$ in $\overline{\Omega}$. The sub- and supersolution method applies, and problem \eqref{p} has a positive solution $u$ for $\lambda=\lambda_2$ such that $\phi_\varepsilon \leq u \leq u(\lambda_1)$ in $\overline{\Omega}$, where $u\not\equiv u(\lambda_1)$. Thanks to Proposition \ref{prop:unista}(i), we obtain $u=u(\lambda_2)$. The desired assertion follows by using the strong maximum principle and boundary point lemma (as developed in \eqref{KOm} and \eqref{KpOm}). \end{proof}

We conclude this section by employing the weak sub- and supersolution method \cite{LS06} to show {\it global} strong positivity for a positive solution of \eqref{p} in the case where $\beta_\Omega<1$.  

\begin{prop} \label{pop:posi}
Assume that $\beta_\Omega < 1$. Then, a positive solution $(\lambda, u)$ of \eqref{p} satisfies that $u>0$ in $\overline{\Omega}$. 
\end{prop}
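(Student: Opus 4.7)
The plan is to use the weak sub- and supersolution theorem of Le and Schmitt \cite{LS06} to replace an arbitrary positive solution $u$ of \eqref{p} by one bounded below by the strictly positive subsolution $\phi_\varepsilon$ from Lemma \ref{prop:sub}, and then to identify the two. Given a positive solution $(\lambda, u)$, I first apply Lemma \ref{prop:sub} with $\Lambda = \lambda$ to obtain $\overline{\varepsilon} > 0$ such that $\phi_\varepsilon$ is a classical subsolution of \eqref{p} for every $\varepsilon \in (0, \overline{\varepsilon}]$; after shrinking $\overline{\varepsilon}$ if necessary, I arrange that $\phi_{\overline{\varepsilon}} \leq 1$ on $\overline{\Omega}$.

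Since $u$ is itself a weak solution, hence a weak subsolution, and $\phi_{\overline{\varepsilon}}$ is a classical subsolution, the pointwise maximum $\alpha := \max(u, \phi_{\overline{\varepsilon}}) \in H^1(\Omega)$ is a weak subsolution of \eqref{p} in the sense of \cite{LS06} (max of weak subsolutions). With $\beta := 1$ as a classical supersolution and $\alpha \leq 1$ in $\overline{\Omega}$ (by Theorem \ref{thm0}(I)), applying the main result of \cite{LS06} to the ordered pair $\alpha \leq \beta$ yields a weak solution $v$ of \eqref{p} with $\alpha \leq v \leq 1$ in $\overline{\Omega}$. By the elliptic regularity recalled in the Introduction, $v \in C^{2+\theta}(\overline{\Omega})$, and $v \geq \phi_{\overline{\varepsilon}} > 0$ throughout $\overline{\Omega}$; in particular $u \leq v$ and $v$ is strictly positive on the whole closure.

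It then remains to conclude $u = v$, which forces $u \geq \phi_{\overline{\varepsilon}} > 0$ in $\overline{\Omega}$. To this end I would set $w := v - u \geq 0$ and test the subtracted weak formulations of $v$ and $u$ against $w$, which gives
\[
E(w) + \int_\Omega (v^p - u^p)(v - u) + \lambda \int_{\partial\Omega}(v^q - u^q)(v - u) = 0.
\]
Since $v \geq u \geq 0$ the two nonlinear terms are nonnegative, so $E(w) \leq 0$. If $w \not\equiv 0$, the plan is to normalize $\psi_n := w/\|w\|$ for a blow-up argument, extract a weak limit $\psi_\infty$, exploit the strict positivity $v \geq \phi_{\overline{\varepsilon}} > 0$ on $\overline{\Omega}$ to control the boundary sublinear ratio $(v^q - u^q)/(v - u)$ and force $\psi_\infty \in H^1_0(\Omega)$ with $\|\psi_\infty\| > 0$, and then combine this with the assumption $\beta_\Omega < 1$ to derive a contradiction, in the spirit of the uniqueness argument of Proposition \ref{prop:unista}(i).

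The main obstacle is this last identification $u = v$: it compares two ordered positive solutions precisely in the regime where one of them may vanish on $\partial\Omega$, so that the boundary nonlinearity $u\mapsto u^q$ loses its one-sided Lipschitz property — the very obstruction that blocked a direct Hopf-type argument in the Introduction. Threading the boundary sublinear term through a quantitative energy estimate that takes advantage of the gap $\beta_\Omega < 1$ is the heart of the argument.
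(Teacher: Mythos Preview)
Your construction of $v$ via Le--Schmitt with $\alpha=\max(u,\phi_{\overline\varepsilon})$ is fine, and you correctly isolate the crux: showing $u=v$. But the argument you sketch for this step does not go through. There is no sequence here, so ``$\psi_n:=w/\|w\|$'' and ``extract a weak limit'' have no meaning; you have a single $\psi:=w/\|w\|$. More importantly, even if you could force $\psi\in H^1_0(\Omega)$ (which you cannot, since $\lambda$ is fixed and not tending to $\infty$), the inequality $E(\psi)\le 0$ is \emph{not} a contradiction when $\beta_\Omega<1$---plenty of $H^1_0$ functions satisfy it. In Proposition~\ref{prop:unista}(i) the contradiction comes from the \emph{strict stability} $\gamma_{1,\mathcal D}>0$ of the limiting Dirichlet solution $u_{\mathcal D}$, and that limit is only available because $\lambda_n\to\infty$. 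At a fixed $\lambda$ you are in effect trying to prove uniqueness of ordered positive solutions of \eqref{p}, which is precisely the open question flagged after Theorem~\ref{thm:leq1}.

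The paper avoids this by changing the parameter rather than comparing at the same $\lambda$. Assuming $u_0$ vanishes somewhere on $\partial\Omega$ for some $\lambda_0$, it picks $\lambda_1>\max(\lambda_0,\underline\Lambda)$, where uniqueness \emph{is} known by Proposition~\ref{prop:unista}(i). Since $\lambda_1>\lambda_0$, the given $u_0$ becomes a weak \emph{super}solution at $\lambda_1$; a compactly supported subsolution $\Phi_\varepsilon$ (built from a positive Dirichlet eigenfunction on a subdomain $\Omega_1\Subset\Omega$ with $\beta_{\Omega_1}<1$, extended by zero) fits under $u_0$ because $u_0>0$ in $\Omega$. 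Le--Schmitt then produces a positive solution $u_1\le u_0$ at $\lambda_1$, which must vanish somewhere on $\partial\Omega$; but by uniqueness at $\lambda_1$ this $u_1$ coincides with the strictly positive solution of Proposition~\ref{prop:maxmin}, a contradiction. The missing idea in your proposal is exactly this passage to large $\lambda$ so that the hard identification step is replaced by an already-proved uniqueness result.
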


\begin{proof}
Assume by contradiction that problem \eqref{p} possesses a positive solution $(\lambda_0, u_0)$ for $\lambda_0 >0$ such that $u_0=0$ somewhere on $\partial\Omega$. 
Let $\lambda=\lambda_1 >\max(\lambda_0, \underline{\Lambda})$, for which problem \eqref{p} has at most one positive solution by Proposition \ref{prop:unista}(i).  
Then, $u_0$ is a weak supersolution of \eqref{p} for $\lambda = \lambda_1$. Indeed, 
\begin{align*}
0 &=\int_\Omega \biggl( \nabla u_0 \nabla \varphi - u_0 \varphi + u_0^p \varphi \biggr) + \lambda_0 \int_{\partial\Omega} u_0^{q} \varphi \\ 
& \leq \int_\Omega \biggl( \nabla u_0 \nabla \varphi - u_0 \varphi + u_0^p \varphi \biggr) + \lambda_1 \int_{\partial\Omega} u_0^{q} \varphi, \quad \varphi\in H^1(\Omega) \mbox{ and } \varphi\geq 0. 
\end{align*}
We next construct a weak subsolution of \eqref{p} for $\lambda=\lambda_1$ that is smaller than or equal to $u_0$. Note that $u_0 \in C^\theta(\overline{\Omega})$ and $u_0>0$ in $\Omega$. From $\beta_\Omega < 1$, it follows by the continuity and monotonicity of $\beta_\Omega$ with respect to $\Omega$ that we can choose a subdomain $\Omega_1 \Subset \Omega$ with smooth boundary $\partial\Omega_1$ such that $\beta_{\Omega_1}<1$. Then, we deduce that
\begin{align} \label{bddblw}
u_0\geq c \quad\mbox{ in } \ \overline{\Omega_1}
\end{align}
for some $c>0$. We also deduce that if $\varepsilon>0$ is sufficiently small, then 
\begin{align*} 
-\Delta (\varepsilon \phi_{\Omega_1}) \leq \varepsilon \phi_{\Omega_1} - (\varepsilon \phi_{\Omega_1})^p \quad \mbox{ in } \Omega_1,  
\end{align*}
where $\phi_{\Omega_1}$ is a positive eigenfunction associated with $\beta_{\Omega_1}$. Consequently, the divergence theorem is applied to $\int_{\Omega_{1}} -\Delta (\varepsilon \phi_{\Omega_1}) \varphi$ for $\varphi \in H^1(\Omega)$ and $\varphi \geq 0$ to deduce that 
\begin{align} \label{l:sub}
\int_{\Omega_1} \biggl( \nabla (\varepsilon \phi_{\Omega_1}) \nabla \varphi - (\varepsilon \phi_{\Omega_1}) \varphi + (\varepsilon \phi_{\Omega_1})^p \varphi \biggr) \leq 0.     
\end{align}
Define 
\begin{align*}
\Phi_\varepsilon = \left\{ \begin{array}{ll}
\varepsilon \phi_{\Omega_1} & \mbox{ in } \overline{\Omega_1},   \\
0 & \mbox{ in } \Omega \setminus \overline{\Omega_1}, 
\end{array}    \right.
\end{align*}
and $\Phi_\varepsilon \in H^1(\Omega) \cap C(\overline{\Omega})$. By virtue of \eqref{l:sub}, the linking technique \cite[Lemma I.1]{BL80} yields 
\begin{align*}
\int_\Omega \biggl( \nabla\Phi_\varepsilon \nabla \varphi - \Phi_\varepsilon \varphi + \Phi_\varepsilon^p \, \varphi \biggr) + \lambda_1 \int_{\partial\Omega} \Phi_\varepsilon^{q}\, \varphi 
= \int_{\Omega } \biggl( \nabla\Phi_\varepsilon \nabla \varphi - \Phi_\varepsilon \varphi + \Phi_\varepsilon^p \, \varphi \biggr) \leq0. 
\end{align*}
Thanks to \eqref{bddblw}, we can take $\varepsilon>0$ such that $\Phi_\varepsilon \leq u_0$ in $\overline{\Omega}$, as desired.

The weak sub- and supersolution method \cite[Subsection 2.2]{LS06} 
is now applicable to deduce the existence of a positive solution $(\lambda_1, u_1)$ of \eqref{p} such that $\Phi_\varepsilon\leq u_1 \leq u_0$ in $\overline{\Omega}$. 
Particularly, $u_1=0$ somewhere on $\partial\Omega$ because so is 
$u_0$. However, this contradicts Proposition \ref{prop:maxmin} in view of the uniqueness. 
\end{proof}

\begin{proof}[Proof of Theorem \ref{thm:leq1}] 
The uniqueness assertion follows from Proposition \ref{prop:unista}(i). Assertions (i-a) and (i-b) follow from Propositions \ref{prop:unista}(ii) and \ref{prop:asympt}, respectively.  Assertion (i-c) is verified by Corollary \ref{cor:curve} and an analogous argument as in the proof of Corollary \ref{cor:curve}. Assertion (ii) follows from Proposition \ref{pop:posi}. 
\end{proof}

\begin{proof}[Proof of Theorem \ref{thm:=1<pq}] 
The existence part follows from Proposition \ref{prop:maxmin}. Assertion \eqref{tophOm} follows from Propositions \ref{prop:asympt} and \ref{prop:unprofi}. 
\end{proof}

\section{Proof of Theorem \ref{thm:nobif}}

\label{sec:nobif} 

This section is devoted to the proof of Theorem \ref{thm:nobif}. 

(i) We prove assertion (i). 
Assume by contradiction that problem \eqref{p} has a positive solution $(\lambda_n,u_n)$ with $\lambda_n \to \infty$. Then, Proposition \ref{prop:asympt} shows that $\| u_n \| \rightarrow 0$; thus, Proposition \ref{prop:unprofi} shows that $\frac{u_n}{\| u_n\|} \rightarrow \phi_\Omega$ in $H^1(\Omega)$. We apply Lemma \ref{prop:E}(a) and (b); then, for $u_n = s_n \phi_\Omega + v_n \in \langle \phi_\Omega \rangle \oplus V$ as in \eqref{vn1557}, we have \eqref{Evn1911} with \eqref{sn1145}--\eqref{vnsn1145}.

Observe from \eqref{vn1145} that $\| v_n\|\rightarrow 0$. 
Say that $\psi_n=\frac{v_n}{\| v_n\|}$; then, up to a subsequence, $\psi_n \rightharpoonup \psi_\infty$ ($\geq0$ on $\partial\Omega$), and $\psi_n \rightarrow \psi_\infty$ in $L^2(\Omega)$ and $L^2(\partial\Omega)$ for some $\psi_\infty \in H^1(\Omega)$. 
From \eqref{Evn1911}, it follows that 
\[
c\int_{\partial\Omega}\psi_n^{q+1}\leq -E(\psi_n)\| v_n\|^{1-q}  \longrightarrow 0, 
\]
so that $\int_{\partial\Omega} \psi_\infty^{q+1}=0$, i.e., $\psi_\infty\in H^1_0(\Omega)$. Lastly, we use the condition $E(\psi_n)\leq 0$ derived from \eqref{Evn1911} to follow the argument in the last paragraph of the proof of Proposition \ref{prop:unprofi}; then, we arrive at the contradiction $0\neq \psi_\infty \in \langle \phi_\Omega \rangle \cap V = \{ 0\}$. 

(ii) We verify assertion (ii). We remark that the convergences $(\lambda_n,u_n) \rightarrow (\lambda_\infty,0)$ with $\lambda_\infty\geq0$ in $\mathbb{R}\times H^1(\Omega)$ and $\mathbb{R}\times C(\overline{\Omega})$ are equivalent 
for a positive solution $(\lambda_n,u_n)$ of \eqref{p} with $\lambda_n>0$. This is verified by the bootstrap argument \cite[Lemma 3.3]{Um2022}. In fact, the proof of assertion (ii) is similar to that for assertion (i). Assume by contradiction that problem \eqref{p} has a positive solution $(\lambda_n, u_n)$ with the condition that $\lambda_n \rightarrow \lambda_\infty > 0$ and $\| u_n\|\rightarrow 0$. Lemma \ref{prop:E}(a) and (c) apply; then, we arrive at a contradiction.  

(iii) To verify assertion (iii), we prove the following {\it three} auxiliary lemmas. Say that $U_n=\lambda_n^{-\frac{1}{1-q}}u_n$.

\begin{lem} \label{lem1809}
There exists $C>0$ such that $\| U_n\|\leq C$ for a positive solution $(\lambda_n,u_n)$ of \eqref{p} with $\lambda_n>0$ satisfying that $(\lambda_n,u_n)\rightarrow (0,0)$ in $\mathbb{R}\times H^1(\Omega)$. 
\end{lem}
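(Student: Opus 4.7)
The plan is to argue by contradiction. Assume $\{\|U_n\|\}$ is unbounded and, passing to a subsequence, that $\|U_n\|\to\infty$. Setting $W_n = U_n/\|U_n\|$ gives $\|W_n\|=1$, so along a further subsequence $W_n\rightharpoonup W_\infty\geq0$ in $H^1(\Omega)$ with $W_n\to W_\infty$ strongly in $L^2(\Omega)$ and $L^2(\partial\Omega)$ (Rellich--Kondrachov and the compact trace embedding) and a.e.\ in $\Omega$. The strategy is to show that $W_\infty$ satisfies, in the limit, the linear Neumann problem $-\Delta W_\infty = W_\infty$ with $\partial W_\infty/\partial\nu=0$, which together with $W_\infty\geq0$ forces $W_\infty\equiv 0$, while a rescaled energy identity simultaneously forces $\int_\Omega W_\infty^2=\tfrac{1}{2}$, giving the contradiction.

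For the technical core I would first rescale the weak formulation. Substituting $u_n=\lambda_n^{1/(1-q)}U_n$ into \eqref{def} and using the exponent identity $1+q/(1-q)=1/(1-q)$ yields
\[
\int_\Omega\bigl(\nabla U_n\nabla\varphi - U_n\varphi\bigr) + \lambda_n^{(p-1)/(1-q)}\int_\Omega U_n^p\varphi + \int_{\partial\Omega}U_n^q\varphi = 0, \quad \varphi\in H^1(\Omega),
\]
and specializing to $\varphi=U_n$ gives the energy identity
\[
E(U_n) + \lambda_n^{(p-1)/(1-q)}\int_\Omega U_n^{p+1} + \int_{\partial\Omega}U_n^{q+1}=0.
\]
Dividing by $\|U_n\|^2$ and using the scaling identity $\lambda_n^{(p-1)/(1-q)}\|U_n\|^{p-1}=\|u_n\|^{p-1}$ rewrites the energy identity as
\[
E(W_n) + \|u_n\|^{p-1}\int_\Omega W_n^{p+1} + \|U_n\|^{q-1}\int_{\partial\Omega}W_n^{q+1}=0.
\]
The subcritical condition \eqref{subcp} and the trace embedding bound $\int_\Omega W_n^{p+1}$ and $\int_{\partial\Omega}W_n^{q+1}$ uniformly; since $\|u_n\|\to0$ by hypothesis and $\|U_n\|^{q-1}\to0$ (because $q<1$ and $\|U_n\|\to\infty$), both nonlinear terms vanish, so $E(W_n)\to0$. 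Combined with $\|W_n\|^2=1$, this forces $\int_\Omega W_n^2\to\tfrac{1}{2}$, hence $\int_\Omega W_\infty^2=\tfrac{1}{2}$ by strong $L^2$ convergence; in particular $W_\infty\not\equiv 0$.

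Now I would pass to the limit in the rescaled weak formulation divided by $\|U_n\|$: the same coefficient decay kills the nonlinear terms (via H\"older combined with the $L^{p+1}(\Omega)$ and $L^{q+1}(\partial\Omega)$ bounds on $W_n$), leaving
\[
\int_\Omega\bigl(\nabla W_\infty\nabla\varphi - W_\infty\varphi\bigr)=0, \quad \varphi\in H^1(\Omega).
\]
Testing with $\varphi\equiv 1$ yields $\int_\Omega W_\infty=0$, and since $W_\infty\geq0$ a.e., this forces $W_\infty\equiv 0$, contradicting $\int_\Omega W_\infty^2=\tfrac{1}{2}$.

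The main obstacle, as I see it, is arranging that both nonlinear terms disappear simultaneously after rescaling. This depends on the exact exponent identity $\lambda_n^{(p-1)/(1-q)}\|U_n\|^{p-1}=\|u_n\|^{p-1}$, which converts vanishing of the interior contribution into a consequence of $\|u_n\|\to0$ (the $H^1$-part of the hypothesis) rather than of $\lambda_n\to 0$ alone; on the sign $q<1$, which turns the blow-up $\|U_n\|\to\infty$ into decay $\|U_n\|^{q-1}\to0$ in the boundary contribution; and on \eqref{subcp} together with the compact trace embedding to keep $\int_\Omega W_n^{p+1}$ and $\int_{\partial\Omega}W_n^{q+1}$ uniformly bounded along the normalized sequence.
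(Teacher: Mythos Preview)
Your proof is correct and follows essentially the same route as the paper: argue by contradiction, normalize $W_n=U_n/\|U_n\|$, use the rescaled energy identity to show $W_\infty\neq 0$, and test the rescaled weak formulation with $\varphi\equiv 1$ to force $\int_\Omega W_\infty=0$ and hence $W_\infty=0$. The only cosmetic differences are that the paper obtains $W_\infty\neq 0$ from the cruder inequality $E(W_n)\le 0$ via Lemma~\ref{lem1719} (rather than your sharper $E(W_n)\to 0$, giving $\int_\Omega W_\infty^2=\tfrac12$), and that it rewrites the interior nonlinearity as $\int_\Omega u_n^{p-1}w_n$ and kills it by dominated convergence using $u_n<1$, whereas you use the exponent identity $\lambda_n^{(p-1)/(1-q)}\|U_n\|^{p-1}=\|u_n\|^{p-1}$ together with the subcritical Sobolev bound on $\int_\Omega W_n^{p+1}$.
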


\begin{proof}
Assume by contradiction that $\| U_n\|\rightarrow \infty$. Say that $w_n = \frac{U_n}{\| U_n\|}$; then, up to a subsequence, $w_n \rightharpoonup w_\infty\geq0$, and $w_n \to w_\infty$ in $L^{2}(\Omega)$ and $L^2(\partial\Omega)$ for some $w_\infty \in H^1(\Omega)$.  Since $E(w_n)\leq0$, Lemma \ref{lem1719} provides $w_\infty\neq 0$. 

Recall that $(\lambda,U)=(\lambda_n,U_n)$ satisfies 
\begin{align} \label{Upro1758} 
\int_\Omega \biggl( \nabla U \nabla \varphi - U \varphi + \lambda^{\frac{p-1}{1-q}} U^{p}\varphi \biggr) + \int_{\partial\Omega} U^{q}\varphi = 0, \quad \varphi \in H^1(\Omega). 
\end{align}
Using the test function $\varphi=1$ in \eqref{Upro1758}, we deduce that 
\begin{align*}
\int_\Omega U_n = \lambda_n^{\frac{p-1}{1-q}}\int_\Omega U_n^p + \int_{\partial\Omega}U_n^q 
= \int_\Omega u_n^{p-1}U_n + \int_{\partial\Omega}U_n^q, 
\end{align*}
implying 
\begin{align} \label{wn1624}
\int_\Omega w_n = \int_\Omega u_n^{p-1}w_n + \int_{\partial\Omega}w_n^q \| U_n\|^{q-1}.  
\end{align}
We may assume that $u_n \rightarrow 0$ a.e.\ in $\Omega$, and since $u_n<1$ in $\overline{\Omega}$, we deduce that 
\[
\int_\Omega u_n^{p-1}w_n = \int_\Omega u_n^{p-1}w_\infty + \int_\Omega u_n^{p-1}(w_n - w_\infty) \longrightarrow 0, 
\]
by applying the Lebesgue dominated convergence theorem and using the condition $w_n \rightarrow w_\infty$ in $L^2(\Omega)$. Then, passing to the limit in \eqref{wn1624} yields $\int_\Omega w_\infty = 0$, i.e., $w_\infty=0$, which is a contradiction.
\end{proof}

\begin{lem} \label{lem1819}
Assume that $\beta_\Omega = 1$. Then, there is no positive solution $U$ of \eqref{Upro1758} for $\lambda=0$.
\end{lem}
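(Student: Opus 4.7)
The plan is to derive a contradiction by testing the weak formulation \eqref{Upro1758} (with $\lambda=0$) against the Dirichlet eigenfunction $\phi_\Omega$, which lies in $H^1_0(\Omega) \subset H^1(\Omega)$ and is thus an admissible test function. With $\lambda=0$, the equation collapses to
\begin{align*}
\int_\Omega \bigl( \nabla U \nabla \varphi - U \varphi \bigr) + \int_{\partial\Omega} U^{q}\varphi = 0, \qquad \varphi \in H^1(\Omega).
\end{align*}
Since $\phi_\Omega$ vanishes on $\partial \Omega$, the boundary term disappears when $\varphi = \phi_\Omega$. Integrating by parts and using $-\Delta \phi_\Omega = \beta_\Omega \phi_\Omega = \phi_\Omega$ (here is where $\beta_\Omega = 1$ is essential), the interior terms cancel and one obtains
\begin{align*}
0 = \int_{\partial\Omega} U \,\frac{\partial \phi_\Omega}{\partial \nu}.
\end{align*}

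Now invoke the Hopf-type estimate \eqref{bdry1513}, namely $\frac{\partial \phi_\Omega}{\partial \nu} \leq -c_1 < 0$ on $\partial\Omega$. Since $U \geq 0$ on $\partial\Omega$, the integrand is $\leq 0$ pointwise, and the integral vanishes only if $U \equiv 0$ on $\partial\Omega$. This forces $U \in H^1_0(\Omega)$.

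Once $U$ sits in $H^1_0(\Omega)$ and satisfies $-\Delta U = U$ in $\Omega$ in the weak sense, the bootstrap/elliptic regularity argument (together with $U \geq 0$, $U \not\equiv 0$) and the simplicity of the principal Dirichlet eigenvalue $\beta_\Omega = 1$ imply that $U = c\,\phi_\Omega$ for some $c > 0$. The contradiction appears at the boundary: on $\partial\Omega$ we would have $\frac{\partial U}{\partial \nu} = c \frac{\partial \phi_\Omega}{\partial \nu} \leq -c\, c_1 < 0$, whereas the boundary condition in \eqref{Upro1758} (encoded by the surface integral $\int_{\partial\Omega} U^q \varphi$) demands $\frac{\partial U}{\partial \nu} = -U^q = 0$ on $\partial\Omega$. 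This contradiction rules out a positive solution and completes the proof.

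The only slightly delicate point is the regularity needed to justify integration by parts and the passage from the weak identity to the pointwise boundary condition; however, $U$ inherits $W^{1,r}(\Omega)$ and $C^{\theta}(\overline{\Omega})$ regularity exactly as in the discussion following \eqref{def} (the same bootstrap applies since the nonlinearity $U^q$ is bounded once $U$ is), so these steps are routine. The heart of the argument is the rigid identity $0 = \int_{\partial\Omega} U\, \partial_\nu \phi_\Omega$ that the critical value $\beta_\Omega = 1$ produces.
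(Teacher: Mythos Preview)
Your argument is correct and rests on the same key identity as the paper's proof: test \eqref{Upro1758} (with $\lambda=0$) against $\phi_\Omega$ and use $-\Delta\phi_\Omega=\phi_\Omega$ together with \eqref{bdry1513} to obtain $\int_{\partial\Omega} U\,\partial_\nu\phi_\Omega=0$. The only difference is in how the contradiction is extracted. The paper first tests with $\varphi=1$ to get $\int_{\partial\Omega}U^q=\int_\Omega U>0$, hence $U>0$ on a set of positive boundary measure, which makes $\int_{\partial\Omega} U\,\partial_\nu\phi_\Omega<0$ and finishes immediately. You instead conclude $U\equiv0$ on $\partial\Omega$, identify $U=c\phi_\Omega$ via the simplicity of $\beta_\Omega=1$, and then contradict the encoded Neumann condition $\partial_\nu U=-U^q=0$. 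Both routes are valid; the paper's is a line shorter because it avoids the eigenfunction identification and the regularity discussion you flag at the end (once $\varphi=1$ gives $\int_\Omega U=\int_{\partial\Omega}U^q$, the contradiction is purely integral and needs nothing beyond $U\in H^1(\Omega)$).
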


\begin{proof}
If it exists, then from \eqref{Upro1758} with $\lambda=0$ and $\varphi=1$, 
it follows that $U>0$ on $\Gamma \subset \partial\Omega$ with $|\Gamma|>0$, implying $\int_{\partial\Omega} \frac{\partial \phi_\Omega}{\partial \nu} U<0$. We use the test function $\varphi = \phi_\Omega$ to deduce that 
\begin{align*}
\int_\Omega \biggl( \nabla U \nabla \phi_\Omega - U\phi_\Omega \biggr) = 0. 
\end{align*}
However, the divergence theorem leads us to the contradiction
\begin{align*}
\int_\Omega \phi_\Omega U = \int_\Omega -\Delta \phi_\Omega U = \int_\Omega \nabla \phi_\Omega \nabla U - \int_{\partial\Omega} \frac{\partial \phi_\Omega}{\partial \nu} U > \int_\Omega \nabla \phi_\Omega \nabla U.
\end{align*}
\end{proof}

\begin{lem} \label{lem1813}
Assume that $\beta_\Omega = 1$ and $pq\geq1$. Then, there exists $C>0$ such that $\| U_n \|\geq C$ for a positive solution $(\lambda_n, U_n)$ of \eqref{Upro1758} with $\lambda_n \to 0^{+}$. 
\end{lem}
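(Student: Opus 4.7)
The proof is by contradiction: suppose $\|U_n\| \to 0$ along a subsequence. Since $u_n = \lambda_n^{1/(1-q)} U_n$ this forces $\|u_n\| \to 0$ as well, and $W_n := u_n/\|u_n\| = U_n/\|U_n\|$. Testing \eqref{Upro1758} with $\varphi = U_n$ yields $E(U_n) + \int_{\partial\Omega} U_n^{q+1} \leq 0$, hence $\int_{\partial\Omega} W_n^{q+1} \leq \|U_n\|^{1-q} \to 0$. This replaces the use of $\lambda_n \geq \underline{\lambda}$ in the proof of Proposition \ref{prop:unprofi}, and the remainder of that compactness argument goes through unchanged and delivers $W_n \to \phi_\Omega$ in $H^1(\Omega)$ (after normalizing $\|\phi_\Omega\|=1$).

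Decompose $u_n = s_n\phi_\Omega + v_n \in \langle\phi_\Omega\rangle \oplus V$ as in \eqref{vn1557}, so that \eqref{sn1145}--\eqref{vnsn1145} are available. The key step is to establish the analogue of \eqref{Evn1911} in our regime, namely
\[
E(v_n) + \frac{\lambda_n}{2}\int_{\partial\Omega} v_n^{q+1} \leq 0 \quad\mbox{ for large } n.
\]
By \eqref{En1034} this reduces to $I_n \geq 0$ for large $n$, where $I_n$ has the representation \eqref{In1057}. Following case (c) of Lemma \ref{prop:E}, we test \eqref{def} with $\varphi = 1$ and divide by $s_n$; using \eqref{vnsn1145} we obtain $(\lambda_n/s_n)\int_{\partial\Omega} v_n^q \to \int_\Omega \phi_\Omega > 0$, and H\"older's inequality then yields $c(s_n/\lambda_n)^{(q+1)/q} \leq \int_{\partial\Omega} v_n^{q+1}$. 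Inserting this lower bound into \eqref{In1057} gives
\[
I_n \geq s_n^{p+1}\bigl\{ c\, s_n^{1/q - p}\,\lambda_n^{-1/q} - C \bigr\}.
\]
Under the hypotheses $pq \geq 1$ and $\lambda_n \to 0^+$, the exponent $1/q - p \leq 0$, so $s_n^{1/q - p} \geq 1$ for small $s_n$, while $\lambda_n^{-1/q} \to \infty$; the braced quantity thus tends to $+\infty$ and $I_n \geq 0$ for all large $n$, as required.

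The contradiction now follows exactly as in the proof of Theorem \ref{thm:nobif}(i). Set $\psi_n = v_n/\|v_n\|$ (note $\|v_n\| \to 0$ by \eqref{vn1145}) and divide the displayed inequality by $\|v_n\|^{q+1}$ to find
\[
\int_{\partial\Omega} \psi_n^{q+1} \leq \frac{2}{\lambda_n}\,|E(\psi_n)|\,\|v_n\|^{1-q}.
\]
Because $\|v_n\|^{1-q}/\lambda_n = o(\|u_n\|^{1-q}/\lambda_n) = o(\|U_n\|^{1-q}) \to 0$, the right-hand side tends to $0$, so any weak subsequential limit $\psi_\infty$ of $\{\psi_n\}$ in $H^1(\Omega)$ lies in $H^1_0(\Omega)$. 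Lemma \ref{lem1719} applied to $\{\psi_n\}$ (using $E(\psi_n)\leq 0$ and $\|\psi_n\|=1$) ensures $\psi_\infty \neq 0$, and weak lower semicontinuity together with $\beta_\Omega = 1$ then forces $\psi_\infty \in \langle\phi_\Omega\rangle$. Since $V$ is closed we also have $\psi_\infty \in V$, yielding the contradiction $\psi_\infty \in \langle\phi_\Omega\rangle \cap V = \{0\}$. The main obstacle is the delicate balance in the bound for $I_n$: when $pq = 1$ the factor $s_n^{1/q - p}$ is merely $\geq 1$, so the required divergence must be supplied entirely by $\lambda_n^{-1/q}\to\infty$, which is precisely why the hypothesis $\lambda_n \to 0^+$ cannot be relaxed and why none of the cases (a)--(c) of Lemma \ref{prop:E} covers this regime directly.
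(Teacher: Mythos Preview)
Your proof is correct and follows the same overall strategy as the paper: assume $\|U_n\|\to 0$, show the normalized sequence converges to $\phi_\Omega$, perform the orthogonal decomposition along $\langle\phi_\Omega\rangle\oplus V$, verify the analogue of $I_n\geq 0$, and reach the contradiction $0\neq \psi_\infty\in\langle\phi_\Omega\rangle\cap V$. The one genuine difference is the choice of variable: you pull back to $u_n=\lambda_n^{1/(1-q)}U_n$ and work with the original equation \eqref{p}, whereas the paper decomposes $U_n$ itself and works directly with \eqref{Upro1758}. Because the boundary coefficient in \eqref{Upro1758} is $1$ rather than $\lambda_n$, the paper's analogue of \eqref{En1034} reads $E(v_n)+\tfrac12\int_{\partial\Omega}v_n^{q+1}+J_n\leq 0$ with $J_n$ as in \eqref{Jn1522}; after the same H\"older-based estimate (which now gives $J_n\geq s_n^{p+1}\bigl\{c\,s_n^{1/q-p}-2\lambda_n^{(p-1)/(1-q)}C\bigr\}$, where $\lambda_n\to 0$ makes the second term vanish) the endgame $\int_{\partial\Omega}\psi_n^{q+1}\leq 2|E(\psi_n)|\,\|v_n\|^{1-q}\to 0$ is immediate. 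Your route requires the extra computation $\|v_n\|^{1-q}/\lambda_n=o(\|U_n\|^{1-q})\to 0$, which is correct but avoidable; working in the rescaled variable from the outset is a bit more economical and explains why the paper introduces \eqref{Upro1758} in the first place.
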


\begin{proof}
Assume by contradiction that $(\lambda_n,U_n) \rightarrow (0,0)$ in $\mathbb{R}\times H^1(\Omega)$ for a positive solution $(\lambda_n, U_n)$ of \eqref{Upro1758}. Say that $w_n = \frac{U_n}{\| U_n\|}$; then, up to a subsequence, $w_n \rightharpoonup w_\infty\geq0$, $w_n \rightarrow w_\infty$ in $L^{p+1}(\Omega)$ and $L^2(\partial\Omega)$ for some $w_\infty \in H^1(\Omega)$. From \eqref{Upro1758} with $(\lambda, U)=(\lambda_n, U_n)$ and $\varphi = U_n$, it follows that 
\begin{align} \label{Un1819}
\int_\Omega \biggl( |\nabla U_n|^2 - U_n^2 + \lambda_n^{\frac{p-1}{1-q}}U_n^{p+1} \biggr) + \int_{\partial\Omega} U_n^{q+1} = 0.     
\end{align}
We then deduce that $\int_{\partial\Omega} w_n^{q+1}\leq \int_{\Omega} w_n^2 \| U_n \|^{1-q} \rightarrow 0$; thus, $\int_{\partial\Omega}w_\infty^{q+1}=0$, i.e., $w_\infty \in H^1_0(\Omega)$. We also deduce from \eqref{Un1819} that $E(w_n)=\int_\Omega (|\nabla w_n|^2 - w_n^2)\leq0$. Thus, we derive that $w_n \rightarrow \phi_\Omega$ in $H^1(\Omega)$ using a similar argument as in the last paragraph of the proof of Proposition \ref{prop:unprofi}.

For a contradiction, we use the same strategy developed in the proof of assertion (i). To this end, we consider the orthogonal decomposition $U_n = s_n\phi_\Omega + v_n \in \langle \phi_\Omega \rangle \oplus V$ as in \eqref{vn1557}; then, we obtain \eqref{sn1145} to \eqref{vnsn1145} with $u_n$ replaced by $U_n$. 
As in the proof of Lemma \ref{prop:E},  
we deduce the following counterpart of \eqref{En1034} and \eqref{In1034} for \eqref{Un1819}:
\begin{align}
&E(v_n) + \frac{1}{2} \int_{\partial\Omega} v_n^{q+1} + J_n \leq 0, \quad\mbox{ with } \nonumber \\
&J_n = \frac{1}{2} \int_{\partial\Omega} v_n^{q+1}-2s_n\int_{\partial\Omega} \biggl( 
-\frac{\partial \phi_\Omega}{\partial \nu} \biggr) v_n. \label{Jn1522}
\end{align}
In the same spirit of Lemma \ref{prop:E} (\eqref{In1338}), we establish 
\begin{align}\label{Evn1824}
E(v_n) + \frac{1}{2} \int_{\partial\Omega} v_n^{q+1} \leq 0 \quad \mbox{ for sufficiently large $n$, } 
\end{align}
by verifying that 
\begin{align} \label{Jn}
J_n\geq0 \quad \mbox{ for sufficiently large $n$. }
\end{align}
Analogously to \eqref{vnsn1033}, we obtain 
\begin{align*}
\int_{\partial\Omega} \biggl( -\frac{\partial \phi_\Omega}{\partial \nu}\biggr) v_n = \lambda_n^{\frac{p-1}{1-q}} s_n^p \int_\Omega \left( 
\phi_\Omega + \frac{v_n}{s_n} \right)^{p} \phi_\Omega. 
\end{align*}
Using this assertion, we deduce from \eqref{Jn1522} that 
\begin{align} 
J_n = s^{p+1} \biggl\{ \frac{1}{2} \int_{\partial\Omega} \frac{v_n^{q+1}}{s^{p+1}} - 2\lambda_n^{\frac{p-1}{1-q}}  
\int_\Omega \left(\phi_\Omega + \frac{v_n}{s_n} \right)^{p} \phi_\Omega \biggr\}.  \label{Jn1253}
\end{align} 
Furthermore, we use the test function $\varphi = 1$ in \eqref{Upro1758} to obtain  
\begin{align*}
-\int_\Omega U_n + \lambda_n^{\frac{p-1}{1-q}} \int_\Omega U_n^{p} + \int_{\partial\Omega} U_n^{q} = 0. 
\end{align*}
Substituting $U_n = s_n\phi_\Omega + v_n$, 
\begin{align*}
-\int_\Omega \biggl( \phi_\Omega + \frac{v_n}{s_n} \biggr) + \lambda_n^{\frac{p-1}{1-q}} s_n^{p-1} \int_\Omega \biggl( \phi_\Omega + \frac{v_n}{s_n} \biggr)^p 
+ \int_{\partial\Omega}\frac{v_n^q}{s_n} = 0, 
\end{align*}
from which we use \eqref{vnsn1145} with $U_n$ to infer that 
\begin{align*}
\int_{\partial\Omega}\frac{v_n^q}{s_n}  \longrightarrow \int_\Omega \phi_\Omega >0.      
\end{align*}
Then, we may deduce that 
\begin{align*}
c s_n \leq \int_{\partial\Omega} v_n^q 
\end{align*}
for some $c>0$. By H\"older's inequality, we deduce that  
\begin{align*}
c s_{n}^{\frac{q+1}{q}}\leq \int_{\partial\Omega}v_{n}^{q+1}
\end{align*}
for some $c>0$. We use this inequality to derive from \eqref{Jn1253} that  
\begin{align*}
J_n \geq s^{p+1} \biggl\{ cs_n^{\frac{1}{q}-p} - 2\lambda_n^{\frac{p-1}{1-q}}  
\int_\Omega \left(\phi_\Omega + \frac{v_n}{s_n} \right)^{p} \phi_\Omega \biggr\}  
\end{align*}
for some $c>0$; thus, \eqref{Jn} follows. Assertion \eqref{Evn1824} has been now established. 

We end the proof of this lemma. Observe from \eqref{vn1145} with $U_n$ that $\| v_n\|\rightarrow 0$. 
Then, we develop the same argument as in the second paragraph of the proof of assertion (i) to arrive at the same contradiction. 
\end{proof}

Employing the above lemmas, we then verify assertion (iii). Assume by contradiction that $(\lambda_n,u_n)\rightarrow (0,0)$ in $\mathbb{R}\times H^1(\Omega)$ for a positive solution $(\lambda_n,u_n)$ of \eqref{p} with $\lambda_n>0$. 
Then, $(\lambda_n, U_n)$ with $U_n = \lambda_n^{-\frac{1}{1-q}}u_n$ admits a positive solution of \eqref{Upro1758}. Since $U_n$ is bounded in $H^1(\Omega)$ by Lemma \ref{lem1809}, we deduce that up to a subsequence, 
$U_n \rightharpoonup U_\infty\geq0$, and $U_n \rightarrow U_\infty$ in $L^{p+1}(\Omega)$ and $L^2(\partial\Omega)$ for some $U_\infty\in H^1(\Omega)$. Thanks to Lemma \ref{lem1813}, we apply Lemma \ref{lem1719} to obtain $U_\infty\neq 0$.  

Furthermore, substituting $(\lambda,U)=(\lambda_n,U_n)$ into \eqref{Upro1758} and then taking the limit, we deduce that 
\begin{align*}
\int_\Omega \biggl( \nabla U_\infty \nabla \varphi - U_\infty\varphi \biggr) + \int_{\partial\Omega} U_\infty^q \varphi = 0.     
\end{align*} 
This implies that $U_\infty$ is a nonnegative solution of \eqref{Upro1758} for $\lambda=0$. 
Finally, Lemma \ref{lem1819} provides $U_\infty=0$, which is a contradiction.

The proof of Theorem \ref{thm:nobif} is complete.  \qed

\begin{rem}
Assertions (ii) and (iii) of Theorem \ref{thm:nobif} 
are derived also from Lemma \ref{prop:sub} when $\beta_\Omega = 1$ and 
$pq>1$ additionally if the positive solution is positive in $\overline{\Omega}$.   
\end{rem}

\section{Stability analysis of the trivial solution} 

\label{sec:trista}

In the last section, we consider the stability of the trivial solution $u=0$. 
It is worthwhile to mention that a linearized stability analysis does not work for $u=0$ because $u \mapsto u^q$ is not differentiable at $u=0$. 
The corresponding initial-boundary value problem is formulated as follows:
\begin{align}  \label{ibp}
\begin{cases}
\frac{\partial u}{\partial t}(t,x) = \Delta u + u-u^{p} & \mbox{ in } (0,\infty) \times \Omega, \\
\frac{\partial u}{\partial \nu}  = -\lambda u^q & \mbox{ on } (0,\infty)\times \partial\Omega, \\
u(0,x)=u_0(x)\geq0 & \mbox{ in } \overline{\Omega}.
\end{cases} 
\end{align}
We use the method of monotone iterations to determine the Lyapunov stability of the trivial solution $u=0$ (see \cite[Definition 5.1.1]{Pa92}).

When $\beta_\Omega<1$ or when $\beta_\Omega=1$ and $pq>1$, we observe from Lemma \ref{prop:sub} that $u=0$ is {\it unstable} in the following sense: 
for $u_0 \in C^2(\overline{\Omega})$ sufficiently small such that $u_0>0$ in $\overline{\Omega}$, the  positive solution $u(t,x)$ of \eqref{ibp} corresponding to the initial value $u_0$ moves away from $0$ as $t\to \infty$. 

When $\beta_\Omega > 1$, for $\varepsilon, \delta , \tau > 0$, we set 
\begin{align*}
\psi_{\delta, \varepsilon, \tau}(x) = \delta (\phi_\Omega (x)+\varepsilon)^{\tau}, \quad x\in \overline{\Omega}.     
\end{align*}
Let $\Omega_\rho:=\{ x\in \Omega : {\rm dist}(x,\partial\Omega)<\rho \}$ for $\rho>0$ be a tubular neighborhood of $\partial\Omega$. Then, by \eqref{bdry1513}, for $\rho_0>0$ small, we can choose a constant $c_3=c_3(\rho_0)>0$ such that $|\nabla \phi_\Omega |^2\geq c_3$ in $\Omega_{\rho}$ for $0<\rho\leq \rho_0$. If $0<\rho \leq \rho_0$, then there exists $c_4=c_4(\rho)>0$ such that $\phi_\Omega \geq c_4$ in $E_\rho:=\Omega \setminus \Omega_\rho$. 

The following result would then provide useful information about the stability of the trivial solution $u=0$.

\begin{theorem} \label{thm:trista}
Assume that $\beta_\Omega>1$. Then, for $\frac{1}{\beta_\Omega}<\tau < 1$ and $\varepsilon>0$ small, there exists $\delta_1>0$ such that $\psi_{\delta, \varepsilon, \tau}$ is a supersolution of \eqref{p} whenever $0<\delta\leq \delta_1$. 
\end{theorem}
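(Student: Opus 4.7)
The plan is to verify the two defining inequalities of a supersolution separately. Writing $\phi:=\phi_\Omega$ and using $-\Delta\phi=\beta_\Omega\phi$, a direct computation gives
$$-\Delta\psi_{\delta,\varepsilon,\tau} = \delta\tau(1-\tau)(\phi+\varepsilon)^{\tau-2}|\nabla\phi|^2 + \delta\tau\beta_\Omega\,\phi\,(\phi+\varepsilon)^{\tau-1}.$$
Because $\psi_{\delta,\varepsilon,\tau}^p\ge 0$, the interior supersolution inequality $-\Delta\psi_{\delta,\varepsilon,\tau}\ge \psi_{\delta,\varepsilon,\tau}-\psi_{\delta,\varepsilon,\tau}^p$ is implied by the stronger (and simpler) condition $-\Delta\psi_{\delta,\varepsilon,\tau}\ge\psi_{\delta,\varepsilon,\tau}$. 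Dividing through by $\delta(\phi+\varepsilon)^\tau>0$ reduces this to the $\delta$- and $\lambda$-free bound
$$F_\varepsilon(x):=\tau(1-\tau)\,\frac{|\nabla\phi|^2}{(\phi+\varepsilon)^2}+\tau\beta_\Omega\,\frac{\phi}{\phi+\varepsilon}\ge 1 \quad \text{in }\Omega.$$

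The central step is to establish $F_\varepsilon\ge 1$ throughout $\Omega$ for all sufficiently small $\varepsilon>0$, exploiting the hypothesis $\tau\beta_\Omega>1$. On the interior set $E_\rho$, where $\phi\ge c_4$, the second term alone satisfies $F_\varepsilon\ge \tau\beta_\Omega c_4/(c_4+\varepsilon)$, which exceeds $1$ as soon as $\varepsilon<(\tau\beta_\Omega-1)c_4$. On the tubular neighborhood $\Omega_\rho$, using $|\nabla\phi|^2\ge c_3$, a sufficient condition is
$$\kappa+\tau(1-\tau)\,\frac{c_3}{(\phi+\varepsilon)^2}\ge \tau\beta_\Omega\,\frac{\varepsilon}{\phi+\varepsilon}, \qquad \kappa:=\tau\beta_\Omega-1>0.$$
I split into the two regimes $\phi+\varepsilon\ge \tau\beta_\Omega\varepsilon/\kappa$, where the constant margin $\kappa$ alone dominates the right-hand side, and $\phi+\varepsilon<\tau\beta_\Omega\varepsilon/\kappa$, where the curvature term $\tau(1-\tau)c_3/(\phi+\varepsilon)^2$ beats $\tau\beta_\Omega\varepsilon/(\phi+\varepsilon)$ once $\varepsilon^2\le \kappa(1-\tau)c_3/(\tau\beta_\Omega^2)$. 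Both regimes are thus covered uniformly in $\Omega_\rho$ for $\varepsilon$ sufficiently small, fixing the admissible range of $\varepsilon$.

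The boundary inequality $\partial\psi_{\delta,\varepsilon,\tau}/\partial\nu\ge -\lambda\psi_{\delta,\varepsilon,\tau}^q$ on $\partial\Omega$ is then a direct check: since $\phi=0$ on $\partial\Omega$, combining with \eqref{bdry1513} yields
$$\frac{\partial\psi_{\delta,\varepsilon,\tau}}{\partial\nu}+\lambda\psi_{\delta,\varepsilon,\tau}^q = \delta\tau\varepsilon^{\tau-1}\frac{\partial\phi}{\partial\nu}+\lambda\delta^q\varepsilon^{\tau q} \ge -\delta\tau c_2\varepsilon^{\tau-1}+\lambda\delta^q\varepsilon^{\tau q},$$
which is nonnegative precisely when $\delta^{1-q}\le \lambda\varepsilon^{1-\tau(1-q)}/(\tau c_2)$. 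Defining $\delta_1$ to be this explicit upper bound (with the $\varepsilon$ chosen in the previous step) completes the argument.

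The main obstacle is the two-regime verification of $F_\varepsilon\ge 1$ on $\Omega_\rho$, where neither term individually dominates across the whole boundary layer. The key ingredient that makes it work is the strict gap $\kappa=\tau\beta_\Omega-1>0$ furnished by the hypothesis $\tau>1/\beta_\Omega$, which supplies a uniform positive margin that absorbs the $O(\varepsilon)$ defect produced by the $\tau\beta_\Omega\phi/(\phi+\varepsilon)$ term; note moreover that the boundary step does not interfere with this, since $\delta$ enters only in the boundary inequality, leaving the admissible $\varepsilon$ untouched by the choice of $\delta_1$.
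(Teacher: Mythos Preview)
Your proof is correct and follows the same overall strategy as the paper: decompose $\Omega$ into the tubular neighborhood $\Omega_\rho$ and the interior set $E_\rho$, exploit the lower bound $|\nabla\phi_\Omega|^2\ge c_3$ near the boundary and the margin $\tau\beta_\Omega>1$ in the interior, and handle the boundary inequality exactly as you do. The one technical difference lies on $\Omega_\rho$: the paper simply discards the favorable term $\tau\beta_\Omega\,\phi/(\phi+\varepsilon)$ and then shrinks $\rho$ to some $\rho_1\le\rho_0$ so that $(\varepsilon+\max_{\overline{\Omega_{\rho_1}}}\phi_\Omega)^2\le\tau(1-\tau)c_3$, letting the gradient term alone dominate, whereas you retain both terms and run your two-regime argument, which avoids shrinking $\rho$. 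Your variant is slightly sharper but more involved; the paper's is cruder but shorter---neither route is genuinely different in spirit.
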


\begin{proof}
We write $\psi_{\delta, \varepsilon, \tau}$ simply as $\phi_{\delta, \varepsilon}$.  
By direct computations, we obtain 
\begin{align} 
&\nabla \psi_{\delta, \varepsilon} = \delta \tau (\phi_\Omega + \varepsilon)^{\tau -1} \nabla \phi_\Omega, \label{1stpsi}\\
&\Delta \psi_{\delta, \varepsilon}=\delta \tau (\tau -1)(\phi_\Omega + \varepsilon)^{\tau -2} |\nabla \phi_\Omega|^2 + \delta \tau (\phi_\Omega + \varepsilon)^{\tau -1} \Delta \phi_\Omega.   \label{2ndpsi}
\end{align}
We see from \eqref{2ndpsi} that for $x \in \Omega_\rho$,
\begin{align*} 
\Delta \psi_{\delta, \varepsilon} + \psi_{\delta,\varepsilon} - \psi_{\delta, \varepsilon}^p 
&\leq \delta \tau (\tau -1)(\phi_\Omega + \varepsilon)^{\tau -2} |\nabla \phi_\Omega|^2 + \delta (\phi_\Omega + \varepsilon)^\tau \\
&=\delta (\phi_\Omega + \varepsilon)^{\tau -2} \left\{ -\tau (1-\tau)c_3 + \left( \varepsilon + \max_{\overline{\Omega_\rho}}\phi_\Omega \right)^2 \right\}. 
\end{align*}
We then find $0<\rho_1\leq \rho_0$ and $\varepsilon_1>0$ such that 
\begin{align*}
\left( \varepsilon + \max_{\overline{\Omega_{\rho_1}}}\phi_\Omega \right)^2 \leq \tau (1-\tau)c_3 \quad\mbox{ for } \ 0<\varepsilon\leq \varepsilon_1,  
\end{align*}
and then, 
\begin{align*}
-\Delta \psi_{\delta, \varepsilon}+\psi_{\delta, \varepsilon}-\psi_{\delta, \varepsilon}^p \leq0 \quad \mbox{ in } \Omega_{\rho_1}. 
\end{align*}

Let us fix $c_4=c_4(\rho_1)$, and let $0<\varepsilon\leq \varepsilon_1$. We also see from \eqref{2ndpsi} that for $x \in E_{\rho_1}$, 
\begin{align*}
\Delta \psi_{\delta, \varepsilon} + \psi_{\delta, \varepsilon} - \psi_{\delta, \varepsilon}^p 
&\leq \delta \tau (\phi_\Omega + \varepsilon)^{\tau -1}(-\beta_\Omega)\phi_\Omega + \delta (\phi_\Omega + \varepsilon)^\tau \\ 
& \leq \delta (\phi_\Omega + \varepsilon)^{\tau -1} \left\{ (1-\tau \beta_\Omega) c_4 + \varepsilon \right\}. 
\end{align*}
We then determine $0<\varepsilon_2\leq \varepsilon_1$ such that $(1-\tau \beta_\Omega) c_4 + \varepsilon_2 \leq 0$, and then, 
\begin{align*}    
-\Delta \psi_{\delta, \varepsilon_2}+\psi_{\delta, \varepsilon_2}-\psi_{\delta, \varepsilon_2}^p \leq0 \quad \mbox{ in } E_{\rho_1}. 
\end{align*}

Finally, using \eqref{bdry1513}, we see from \eqref{1stpsi} that
\begin{align*}
\frac{\partial \psi_{\delta, \varepsilon_2}}{\partial \nu} + \lambda \psi_{\delta, \varepsilon_2}^q \geq \delta^q (-\delta^{1-q} \tau \varepsilon_2^{\tau -1} c_2 + \lambda \varepsilon_2^{\tau q}) \geq0 \quad\mbox{ on } \partial\Omega, 
\end{align*}
if $0<\delta\leq \delta_1$ for some $\delta_1>0$. 

In summary, $\psi_{\delta, \varepsilon_2}$, $0<\delta\leq \delta_1$, is as desired. 
\end{proof}

From Theorem \ref{thm:trista}, it might be claimed that $u=0$ is {\it asymptotically stable} for the case where $\beta_\Omega>1$, meaning that for $u_0$ in the order interval 
$[0, \psi_{\delta_1, \varepsilon_2, \tau}]$, the positive solution $u(t,x)$ of \eqref{ibp} associated with $u_0$ tends to $0$ as $t\to \infty$. If this occurs, then Theorem \ref{thm0}(II) means that 
problem \eqref{ibp} is bistable with two nonnegative stable equilibria for $0<\lambda\leq \lambda_{\ast}$ (one is $u_\lambda$, and the other is $u=0$), which presents ecologically a {\it conditional persistence} strategy for the harvesting effort $\lambda$. 
However, the difficulty arises from the fact that the monotone iteration scheme does not work for \eqref{ibp} in the order interval $[0, \psi_{\delta_1, \varepsilon_2, \tau}]$ because $u\mapsto (-u^q)$ does not satisfy the one-sided Lipschitz condition \cite[(4.1.19)]{Pa92} for $u$ close to $0$. Rigorous verification of the claim is an open question.


\end{document}